\documentclass[11pt]{article}

\usepackage[T1]{fontenc}
\usepackage{amsmath,amssymb,amsthm,mathtools}
\usepackage[a4paper,margin=1in]{geometry}
\usepackage[hidelinks]{hyperref}
\usepackage{microtype}
\usepackage{xcolor}

\usepackage[normalem]{ulem}
\usepackage{authblk}

\allowdisplaybreaks[4]
\numberwithin{equation}{section}

\theoremstyle{definition}

\newtheorem {theorem}{Theorem}[section]
\newtheorem {lemma}[theorem]{Lemma}
\newtheorem {corollary}[theorem]{Corollary}
\newtheorem {conjecture}[theorem]{Conjecture}
\newtheorem {proposition}[theorem]{Proposition}

\newtheorem {claim}{Claim}[section]

\newtheorem{definition}[theorem]{Definition}

\newcommand{\bl}{\color{blue}}

\newcommand{\cH}{\mathcal H}
\newcommand{\cG}{\mathcal G}
\newcommand{\cI}{\mathcal I}

\title{The upper bounds on occupancy fraction and variance fraction for linear regular uniform hypergraphs}

\author[1,2\thanks{ Email: weiyuanzhang@nuaa.edu.cn}]
{Weiyuan Zhang}

\author[1,2\thanks{Corresponding author. Email: kexxu1221@126.com}]
{Kexiang Xu}

\affil[1]{\footnotesize School of Mathematics,
	Nanjing University of Aeronautics and Astronautics,
	Nanjing, Jiangsu, 210016, PR China}

\affil[2]
{\footnotesize
MIIT Key Laboratory of Mathematical Modelling and High Performance Computing of Air  Vehicles, Nanjing, Jiangsu, 210016, PR China
}

\date{}

\begin{document}

\maketitle

\begin{abstract}
The hard-core model can be used to understand the number of independent sets in graphs in extremal graph theory.
The occupancy and variance fractions are the expectation and variance, respectively, of the independent-set size, normalized by the number of vertices.
We provide upper bounds on the occupancy and variance fractions of linear $d$-regular $r$-uniform hypergraphs  for small positive fugacities.
In proving these results, we use the cluster expansion and the abstract polymer models, two tools from statistical physics, to analyse auxiliary functions associated with independence polynomials, comparing their low-order coefficients and controlling the remainders uniformly in the number of vertices.

\end{abstract}

\medskip
\noindent\textbf{Keywords:}
independent set; hard-core model; variance fraction; cluster expansion; hypergraph.

\par\vspace{1mm}

\noindent{\bfseries  AMS Classification (2020): 05C31, 05C35, 05C65, 05C69}

\section{Introduction}


Let $G$ be a graph with vertex set $V(G)$ and edge set $E(G)$.
The order of $G$ is the number of its vertices.
For a vertex $v\in V(G)$, we define its open neighborhood as $N_G(v)= \{u\in V(G):uv\in E(G)\}$ and its closed neighborhood as $N_G[v] = N_G(v)\cup\{v\}$.
The degree $d_G(u)$ of vertex $u$ is the cardinality of $N_G(u)$.
A set $S\subseteq V(G)$ in a graph $G$ is an \textit{independent set} if $G[S]$, the induced subgraph of $G$ by $S$, consists of $|S|$ isolated vertices. The \textit{independence number} $\alpha(G)$ of $G$ is the cardinality of a maximum independent set of $G$.

All hypergraphs in this paper have a nonempty vertex set and contain no empty hyperedge.
They are finite and simple, so  multiple hyperedges are not allowed.
For a hypergraph $\mathcal G$, the degree of a vertex is the number of hyperedges containing it  in $\mathcal G$.
A hypergraph is $r$-uniform if every hyperedge has size $r$,
$d$-regular if every vertex belongs to exactly $d$ hyperedges,
and linear if any two distinct hyperedges intersect in at most one vertex.
For a hyperedge $e$ of a hypergraph, we use $v(e)$ to denote the set of vertices covered by the hyperedge.
A \emph{weakly independent set} in a hypergraph is a set of vertices that contains no hyperedge.
For convenience, in the remainder of this paper we use ``independent set'' to denote ``weakly independent set'' of the hypergraph.
For either a graph or a hypergraph $\mathcal G$, let $\cI(\mathcal G)$ denote its family of independent sets, including the empty set, and put $\operatorname{ind}(\mathcal G)=|\cI(\mathcal G)|$.

The hard-core model (or hard-core lattice gas), commonly studied on the integer lattice $\mathbb{Z}^d$, is a simple lattice analogue of a hard-sphere gas. The glassy behavior of densely packed hard spheres has attracted considerable attention because of its important physical implications.
In extremal graph theory, the hard-core model can be used to understand the minimum and maximum number of independent sets \cite{DJPR2017}.
From the statistical physics perspective, counting independent sets of a graph is equivalent to counting the configurations of a hard-core lattice gas \cite{SS2005}.
Each vertex represents a site that is either empty or occupied by one particle, and the occupied vertex set contains no edge.

A basic extremal problem asks for the largest or smallest number of independent sets in a graph with prescribed parameters.
Kahn \cite{K2001}  stated that the disjoint union of $n/(2d)$ copies of the complete bipartite graph $K_{d,d}$ has the maximum number of independent sets among all $d$-regular bipartite graphs on $n$ vertices with $n$ divisible by $2d$.
Zhao \cite{Z2010} extended  the above result from $d$-regular bipartite graphs to all $d$-regular graphs.
In 2019, Sah \textit{et al.} \cite{SSSZ2019} determined the maximum number of independent sets in an irregular graph and extended their result to an upper bound on the independence polynomial.
In the other direction, Cutler and Radcliffe~\cite{CR2014} obtained the sharp lower bound on $\operatorname{ind}(G)$ for $d$-regular graphs $G$, with disjoint unions of  complete graphs $K_{d+1}$ attaining equality when the orders permit.

As for hypergraphs, Cooper \textit{et al.}~\cite{CDM2014} proved lower bounds on $\operatorname{ind}(\mathcal G)$ in linear uniform hypergraphs $\mathcal G$ with given order and average degree.
Balogh, Morris and Samotij \cite{BMS2015} and, independently, Saxton and Thomason~\cite{ST2015} developed hypergraph container theorems, providing general tools for counting independent sets.
Later, Cohen \textit{et al.}~\cite{CPST2022} obtained upper bounds on $\operatorname{ind}(\mathcal G)$  for linear regular uniform hypergraphs $\mathcal{G}$ without cross-edges  where a cross-hyperedge in the neighborhood of a vertex $v$ is a  hyperedge $e$ that contains two neighbors $u_1$, $u_2$ of $v$ but not $v$ itself.
Meanwhile, Balogh \textit{et al.} \cite{BBN2021} provided the following conjecture,  proving that it holds when the hypergraph is quasi-bipartite.
 Extremal bounds in this setting have a long history with several surprising applications. For further results beyond those discussed above, see \cite{A1996,CR2013,S1995}.

\begin{conjecture}\cite{BBN2021}
For all $r\geq 2$ and $d\in \mathbb{N}$, if $\mathcal{G}$ is a $d$-regular $r$-uniform hypergraph on $n$ vertices, then
\[ \operatorname{ind}(\mathcal G) \le \operatorname{ind}(\mathcal H_d^r)^{n/(rd)} \]  where $\mathcal H_d^r$ is a hypergraph defined in the following.
\end{conjecture}

For $r \ge 2$ and $d \in \mathbb{N}$, let $\mathcal{H}_d^r$ be the $d$-regular $r$-partite $r$-uniform hypergraph on $rd$ vertices and $d^2$ hyperedges defined as follows: mark a subset of cardinality $d$ from the vertex set, partition the remaining $(r-1)d$ vertices into $d$ sets of $r-1$ vertices each, 
 and then include in the  hyperedge set of $\mathcal{H}_d^r$ where each $r$-set consisting of a marked vertex and an $(r-1)$-subset in the partition.
For example, $\mathcal{H}_d^2$ is the complete bipartite graph $K_{d,d}$, and $\mathcal{H}_d^3$ is the set of triple subsets on $3d$ vertices where $d$  marked vertices are each joined into each $2$-set, forming a $3$-set, of the balanced partition of  the remaining $2d$ vertices into $d$ subsets.

For a hypergraph $\mathcal G$ on $n$ vertices, define its \textit{independence polynomial} by
$$
Z_{\mathcal G}(\lambda)
=\sum_{I\in\cI(\mathcal G)}\lambda^{|I|}
=\sum_{k=0}^{n}i_k(\mathcal G)\lambda^k,
$$
where $i_k(\mathcal G)$ is the number of independent sets in $\mathcal G$ of size $k$.
 There is also a natural statistical-physics viewpoint on this.
Independence polynomial is the partition functions of the hard-core model.
Thus $Z_{\mathcal G}(1)=\operatorname{ind}(\mathcal G)$.
For $\lambda>0$, the \textit{hard-core model} on $\mathcal G$ is the measure $\mu_{\mathcal G,\lambda}$: $\cI(\mathcal G)\rightarrow [0,1]$ given by
$$
\mu_{\mathcal G,\lambda}(I)
=\frac{\lambda^{|I|}}{Z_{\mathcal G}(\lambda)},
\qquad I\in\cI(\mathcal G).
$$
The parameter $\lambda$ is called the fugacity.
 This distribution is known as the grand canonical ensemble and represents particles of a gas (occupied vertices of the independent set) in a volume in thermal equilibrium within a much larger volume (and so particles can enter and leave the small volume) \cite{DJP2021}.
At $\lambda=1$, this is the uniform distribution on independent sets.
Since the empty set is independent, $Z_{\mathcal G}(\lambda)\ge1$ for $\lambda\ge0$.
All logarithms below are natural unless a base is explicitly indicated.

The \textit{free energy density}, \textit{occupancy fraction} and \textit{variance fraction} are defined \cite{DJPR2017,DST2025} as
\begin{align}
F_{\mathcal G}(\lambda)
&=\frac1n\log Z_{\mathcal G}(\lambda),\label{eq:free-energy}\\
E_{\mathcal G}(\lambda)
&=\lambda \frac{d}{d\lambda}F_\mathcal G(\lambda), \label{eq:occupancy}\\
V_{\mathcal G}(\lambda)
&=\lambda \frac{d}{d\lambda}E_\mathcal G(\lambda) .\label{eq:variance}
\end{align}

Note that
\[
n E_\mathcal G(\lambda) = \mathbb{E}_{I\sim \mu_{\mathcal G,\lambda}} (|I|) = \frac{\lambda Z_{\mathcal G}^{\prime}(\lambda)}{Z_{\mathcal G}(\lambda)}
\]
is the expected number of vertices of $\mathcal G$ occupied by a random independent set from the hard-core model.
Similarly, we know that
\[
n V_{\mathcal G}(\lambda) = \mathrm{Var}_{I\sim \mu_{\mathcal G,\lambda}}(|I|) = \mathbb{E}_{I\sim \mu_{\mathcal G,\lambda}} (|I|^2) - \left(\mathbb{E}_{I\sim \mu_{\mathcal G,\lambda}} (|I|)\right)^2
\]
is the variance of the size of an independent set from the model.
From a statistical-mechanical perspective,
$F_{\mathcal G}(\lambda)$ is also called \textit{free energy per site} or \textit{reduced pressure} \cite{SS2005}.
The occupancy fraction $E_{\mathcal G}(\lambda)$ is the average particle density, while
$V_{\mathcal G}(\lambda)$ measures particle-number fluctuations per vertex.
This is the grand canonical fluctuation--response relation \cite{LPV1967,WET2024}.
If there is no confusion, we usually use $\mathbb{E}$ to represent $\mathbb{E}_{I\sim \mu_{\mathcal G,\lambda}}$, and $\mathrm{ Var}$ to represent $\mathrm{ Var}_{I\sim \mu_{\mathcal G,\lambda}}$.
The results of Cutler, Kahn and Zhao~\cite{CR2014, K2001, Z2010} can be expressed as
$F_{K_{d+1}}(\lambda) \le F_G(\lambda) \le F_{K_{d,d}}(\lambda) $
for every $d$-regular graph $G$ and every $\lambda>0$.

It is natural to consider the generalizations of the above results to the bounds on the parameters $E_\mathcal G(\lambda)$ and $V_\mathcal G(\lambda)$, respectively.
Davies \textit{et al.} \cite{DJPR2017} introduced a novel technique, called the occupancy method, and used it to prove the tight upper bounds on the independence polynomial and the matching polynomial of $d$-regular graphs.
In the subsequent work \cite{DJPR2018}, they considered the lower bounds on $E_G(\lambda)$ for triangle-free graphs with a given maximum degree, and derived the corresponding lower bounds on the number of independent sets using the properties of the occupancy fraction.
Cutler and Radcliffe \cite{CR2018} used the same method to obtain lower bounds on $E_G(\lambda)$ for $d$-regular graphs and, as a consequence, a tight lower bound on the independence polynomial for this class of graphs.
Perarnau and Perkins \cite{PP2018} obtained an upper bound on $E_G(\lambda)$ for cubic graphs of girth at least $5$ and a lower bound on $E_G(\lambda)$ for triangle-free cubic graphs.
Davies \textit{et al.}~\cite{DST2025} studied lower bounds on occupancy fraction with a given degree-sequence and began a systematic study of extremal variance fractions. They provided the lower and upper bounds on $V_G(\lambda)$ for $0<\lambda\le \frac{1}{2n-1}$ and conjectured that the lower bound holds for all $\lambda>0$, with $n$ replaced by $\Delta+1$  for the maximum degree $\Delta$.
Zhang and Xu \cite{ZX2026} proved this conjecture and provided the lower bound on $V_G(\lambda)$ for triangle-free graphs with maximum degree $\Delta$ for all $\lambda>0$.
Davies \textit{et al.} \cite{DSST2026} subsequently extended the degree-sequence lower bounds \cite{DST2025} to a multivariate version with  every vertex fugacity less than $\frac 1\Delta$.

These parameters also have uses beyond extremal counting.
Since $\alpha(G)\ge nE_G(\lambda)$, the occupancy fraction gives lower bounds on the independence number and hence upper bounds on  off-diagonal Ramsey numbers \cite{MS2025}.
This connects the work of Shearer~\cite{S1995} with the local occupancy approach of Davies, Jenssen, Perkins, and Roberts~\cite{DJPR2018}.
The conditional independence properties of the hard-core model can be exploited in probabilistic and algorithmic methods for graph coloring and related problems \cite{BLM2022,DVKP2020,DVKP2021,DP2023,JPP2023,M2019,SS2005,Z2017}.

Before presenting our main results, we first define some parameters that will be used.
To state explicit fugacity ranges, we always set, for $r\ge3$ and $d\ge2$,
\begin{equation}\label{eq:A0-rho}
A_0=r(d-1)+1,
\qquad
\zeta=(32A_0)^{-d/r},
\qquad
M=\frac{3d}{20rA_0},
\qquad
\lambda_0(r,d)=\frac{\zeta}{2-\zeta}.
\end{equation}


\begin{theorem}\label{thm:variance-main}
Let $r\ge3$ and $d\ge2$.
If $\mathcal G$ is a finite linear, $d$-regular, $r$-uniform hypergraph, then
\begin{align}\label{eq:variance-refined-bound}
V_{\mathcal G}(\lambda) \le{}& (1-p)\Bigl(p+{d\choose 2}(2r-1)
\bigl(2r-1-2rp\bigr)p^{2r-1}-\bigl(r-(r+1)p\bigr)dp^{r}
-\bigl(2r-(2r+1)p\bigr)dA_0p^{2r} \Bigr) \notag\\& +O(p^{2r+1}),
\end{align}
for every $0<\lambda\le\lambda_0(r,d)$
with $p=\lambda/(1+\lambda)$.
\end{theorem}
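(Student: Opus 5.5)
The plan is to rewrite $Z_{\mathcal G}(\lambda)$ as a polymer partition function in the variable $p=\lambda/(1+\lambda)$ and expand $F_{\mathcal G}$ by the cluster expansion up to order $p^{2r}$. Then I would apply the operator $\lambda\frac{d}{d\lambda}=p(1-p)\frac{d}{dp}$ twice. Finally I would show that every structure-dependent term of order at most $2r$ that is not already in \eqref{eq:variance-refined-bound} has a sign that only helps the upper bound.

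\textbf{Polymer representation.} Including each vertex independently with probability $p$ gives $Z_{\mathcal G}(\lambda)=(1+\lambda)^n\,\Pr[\text{no hyperedge is fully occupied}]$. Inclusion--exclusion then gives
\[
\Pr[\cdot]=\sum_{S\subseteq E(\mathcal G)}(-1)^{|S|}p^{|v(S)|}.
\]
Here $v(S)$ denotes the union of the vertex sets of the hyperedges in $S$. Splitting $S$ into vertex-connected components turns this into a polymer model. The polymers are connected sets $\gamma$ of hyperedges, with weight $w(\gamma)=(-1)^{|\gamma|}p^{|v(\gamma)|}$. Two polymers are compatible when they are vertex-disjoint. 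Hence
\[
F_{\mathcal G}=-\log(1-p)+\frac1n\sum_{\text{clusters }X}\phi(X)\prod_{\gamma\in X}w(\gamma).
\]
For convergence I would verify the Koteck\'y--Preiss condition with $a(\gamma)=|\gamma|$ or $|v(\gamma)|$. A given hyperedge meets at most $A_0=r(d-1)+1$ hyperedges, itself included. Connected polymers of $k$ hyperedges through a fixed edge number at most $(eA_0)^k$, and each has at least $k(r-1)+1$ vertices, which gives weight at most $p^{k(r-1)+1}$. The choice $p\le\zeta/2=(32A_0)^{-d/r}/2$ (this is what $\lambda\le\lambda_0$ amounts to) should make the condition hold, with room to spare, for all complex $p$ in a disk of radius about $2|p|$.

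\textbf{Low-order clusters.} Every cluster with total vertex weight exceeding $2r$ contributes $O(p^{2r+1})$. The clusters of weight at most $2r$ are as follows.
\begin{enumerate}
\item Single hyperedges: $nd/r$ of them, contributing $-\frac dr p^r$ per vertex.
\item Two hyperedges sharing one vertex, which share exactly one vertex by linearity: $n\binom d2$ of them, contributing $+\binom d2p^{2r-1}$.
\item Two-element clusters $\{e,f\}$ with $e,f$ incompatible, including $f=e$, where the Ursell function with the $1/2$ symmetry factor gives $-\tfrac12$ per ordered pair: $\frac{nd}{r}A_0$ ordered pairs, contributing $-\frac{dA_0}{2r}p^{2r}$.
\item Only when $r=3$: linear triangles, which are polymers of 3 edges on $3r-3=2r$ vertices with weight $-p^{2r}$, contributing $-\frac tnp^{2r}$ where $t\ge0$ is the number of triangles.
\end{enumerate}
For each $c\,p^k$ one computes
\[
\bigl(p(1-p)\tfrac{d}{dp}\bigr)^2(c\,p^k)=(1-p)\,c\,k\,p^k\bigl(k-(k+1)p\bigr),
\]
and $\bigl(p(1-p)\tfrac{d}{dp}\bigr)^2\bigl(-\log(1-p)\bigr)=p(1-p)$. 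Applying these to the first three items reproduces exactly the four terms of \eqref{eq:variance-refined-bound}. The triangle term contributes $-(1-p)\frac{6t}{n}p^{6}(6-7p)\le0$ for $p\le 6/7$, so it can be dropped. This is why the statement is an inequality rather than an asymptotic equality.

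\textbf{Remainder (main obstacle).} The hardest step is showing that $V_{\mathcal G}$ minus the explicit part is $O(p^{2r+1})$ with a constant depending only on $r,d$, not on $n$. This requires controlling second derivatives of the cluster tail, not just the tail itself. I would first use the Koteck\'y--Preiss tail bound: the total $|\phi(X)\prod w|$ over clusters containing a fixed vertex with total weight at least $2r+1$ is at most $C(r,d)|p|^{2r+1}$, uniformly on the complex disk $|p|\le 2p_0$. This makes the per-vertex tail $R_n(p)=\frac1n\sum_{\text{high}}(\cdots)$ analytic and bounded by $C|p|^{2r+1}$ on that disk. Cauchy estimates on a circle of radius comparable to $|p|$ then bound $p^j R_n^{(j)}(p)$ for $j\le2$ by $C'|p|^{2r+1}$, which handles the operator $(p(1-p)\frac{d}{dp})^2$ applied to $R_n$. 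The delicate point is checking that the chosen $\zeta$ leaves enough room in the Koteck\'y--Preiss weights for the enlarged disk, with the factor $32$ and the exponent $d/r$ absorbing the polymer count $(eA_0)^k$. Linearity is used both to fix $|v(\gamma)|$ for the two-edge polymers and to rule out other order-$\le2r$ structures.
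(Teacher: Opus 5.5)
Your overall route is the paper's: polymers are connected edge sets with activity $(-1)^{|\gamma|}p^{|v(\gamma)|}$, followed by the cluster expansion, a low-order enumeration, a sign argument and a uniform tail bound. However, your list of clusters of weight at most $2r$ is incomplete for $r=3$, and the missing term has the wrong sign.

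A Pasch configuration consists of four edges on six vertices with every vertex in exactly two of them, e.g.\ $\{1,2,3\},\{1,4,5\},\{2,4,6\},\{3,5,6\}$. It is a polymer with $|\gamma|=4$ and $|v(\gamma)|=6=2r$, so its activity is $+p^{6}$. If $\tau_4$ counts these configurations, they add $+\frac{\tau_4}{n}p^{6}$ to $F_{\mathcal G}$ and $+\frac{6\tau_4}{n}(1-p)(6-7p)p^{6}\ge 0$ to $V_{\mathcal G}$. This is of exact order $p^{2r}$ and cannot be absorbed into $O(p^{2r+1})$. Such configurations really occur: the Fano plane, which is linear, $3$-regular and $3$-uniform, contains seven. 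It is the only omission, since five or more edges cannot lie on six vertices (by linearity each vertex is in at most two of them).

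So your claim that every extra structure-dependent term of order at most $2r$ only helps is false as stated. You need an extra combinatorial comparison. Every Pasch configuration contains four linear triangles. A linear triangle lies in at most one Pasch configuration, because the fourth edge is forced to be the triangle's three vertices of degree one. Hence $\tau_3\ge 4\tau_4$, where $\tau_3$ is your triangle count $t$, and the net coefficient $-(\tau_3-\tau_4)/n$ of $p^{6}$ in $F_{\mathcal G}$ is nonpositive. This is exactly how the paper closes the case $r=3$.

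Your Koteck\'y--Preiss check also rests on a false estimate. For a connected set of $k$ edges, $k(r-1)+1$ is an \emph{upper} bound on $|v(\gamma)|$, not a lower bound: adding edges in a connected order, each brings at most $r-1$ new vertices. Your own triangle has $3r-3<3(r-1)+1$ vertices, and a whole component of $\mathcal G$ with $k$ edges has exactly $kr/d$ vertices. The correct bound comes from double counting incidences in a $d$-regular hypergraph: $|v(\gamma)|\ge kr/d$, so $|w(\gamma)|\le (|p|^{r/d})^{k}$. This is where the exponent $d/r$ in $\zeta$ comes from.

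You also need the rooted polymer count with exponent $k-1$, for instance $(4A_0)^{k-1}$ as the paper derives from the Borgs--Chayes--Kahn--Lov\'asz bound. Then $\sum_{\gamma\ni f}|w(\gamma)|e^{|\gamma|}=O(1/A_0)$, which absorbs the at most $A_0|\gamma|$ edges meeting $\gamma$. With your count $(eA_0)^{k}$, the per-edge sum is only bounded by about $0.3$. Since $A_0\ge 4$, the estimate then fails to give $\sum_{\gamma'\not\sim\gamma}|w(\gamma')|e^{|\gamma'|}\le|\gamma|$ at radius $\zeta$.

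With these corrections the condition holds with $a(\gamma)=|\gamma|$ for all complex $|p|\le\zeta$. Your Cauchy-estimate control of $\mathcal D^2$ applied to the tail is then a valid alternative to the paper's real-variable argument. The paper instead bounds $\sum_k|b_k|\zeta^k\le M$ and differentiates termwise on $p\le\zeta/2$, using $k^2t^k\le m^2t^m$ for $t\le 1/2$.
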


\begin{theorem}\label{thm:occupancy-main}
Let $r\ge3$ and $d\ge2$.
If $\mathcal G$ is a finite linear, $d$-regular, $r$-uniform hypergraph, then
\begin{equation}\label{eq:occupancy-refined-bound}
E_{\mathcal G}(\lambda) \le p+d(1-p)\left(-p^r+\frac{(2r-1)(d-1)}{2}p^{2r-1}-A_0p^{2r}\right)+O(p^{2r+1}),
\end{equation}
for every $0<\lambda\le\lambda_0(r,d)$
with $p=\lambda/(1+\lambda)$.
\end{theorem}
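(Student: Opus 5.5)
Our plan is to compute the low-order expansion of $F_{\mathcal G}(\lambda)$ through the cluster expansion and then differentiate once, since $E_{\mathcal G}=\lambda F_{\mathcal G}'$. We write $Z_{\mathcal G}(\lambda)=(1+\lambda)^n\,\Xi$, where $\Xi$ is the partition function of an abstract polymer model. The polymers are the connected sets of hyperedges (connected in the sense of sharing vertices). Their weights come from inclusion--exclusion on the event that all vertices of a hyperedge are occupied. Each vertex is occupied independently with probability $p=\lambda/(1+\lambda)$, so
\[
\frac{Z_{\mathcal G}(\lambda)}{(1+\lambda)^n}=\Pr\Bigl[\text{no hyperedge fully occupied}\Bigr]=\sum_{S\subseteq E(\mathcal G)}(-1)^{|S|}p^{|v(S)|}.
\]
Factoring over connected components of $S$ gives the polymer representation, with weight $w(\gamma)=(-1)^{|\gamma|}p^{|v(\gamma)|}$.

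Next we need convergence, uniformly in $n$. A polymer with $k$ edges covers at least $k(r-1)+1$ vertices, because linearity lets each new edge meet the union in at most one vertex when built along a spanning tree. The number of connected edge sets of size $k$ containing a fixed edge is at most $(eA_0)^k$, since each edge meets at most $r(d-1)$ others. The Kotecký--Preiss criterion with $a(\gamma)=|v(\gamma)|$-type weights should then hold for $p\le \zeta/2$ or so. The constant $\zeta=(32A_0)^{-d/r}$ and the choice $\lambda_0=\zeta/(2-\zeta)$ (so that $p\le\zeta/2$) look tailored to exactly this. This gives $\log\Xi=\sum_{\text{clusters}}\phi(X)\prod w$, absolutely convergent. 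Moreover, the tail of clusters of total vertex support at least $2r+1$ is $O(np^{2r+1})$, with a constant depending only on $r,d$. The same bound holds for the $\lambda$-derivative, by the standard Cauchy estimate on a slightly smaller disc.

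Then we enumerate the clusters of weight up to $p^{2r}$.
\begin{itemize}
\item Single edges contribute $-|E|p^r=-\frac{nd}{r}p^r$.
\item Two edges sharing one vertex give $p^{2r-1}$. There are $n\binom d2$ such pairs, each contributing $+p^{2r-1}$.
\item The cluster $\{e,e\}$ with Ursell factor $-\tfrac12$ contributes $-\tfrac12|E|p^{2r}$.
\item Pairs of incompatible edges as two-polymer clusters: edges $e,f$ sharing a vertex form an incompatible pair with weight $p^r\cdot p^r=p^{2r}$ and factor $-1$ (ordered pairs counted with $1/2!$).
\end{itemize}
Collecting these and the similar terms (including $e$ with itself) should produce the coefficient $-A_0$ times the appropriate count. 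The number of edges meeting or equal to a given edge is $r(d-1)+1=A_0$, which explains $A_0$. Hence
\[
F_{\mathcal G}=\log(1+\lambda)-\tfrac dr p^r+\tbinom d2 p^{2r-1}-\tfrac{d}{2r}A_0p^{2r}+O(p^{2r+1}).
\]
Clusters of three or more edges have support of at least $3r-2\ge 2r+1$ vertices for $r\ge3$, so they fall in the error term. Since the main terms computed are exact for every linear $d$-regular $r$-uniform hypergraph, the upper bound is really an asymptotic identity with a one-sided error.

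Finally we differentiate. We use $\lambda\frac{d}{d\lambda}=p(1-p)\frac{d}{dp}$, which turns $\log(1+\lambda)$ into $p$ and $p^k$ into $k(1-p)p^k$. This yields exactly $p+d(1-p)\bigl(-p^r+\frac{(2r-1)(d-1)}2p^{2r-1}-A_0p^{2r}\bigr)$, matching Theorem~\ref{thm:occupancy-main}, and applying the operator once more recovers Theorem~\ref{thm:variance-main} as a consistency check.

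The main obstacle is the uniform control of the remainder and of its derivative. The error must be $O(p^{2r+1})$ with constant independent of $n$, which requires a careful Kotecký--Preiss verification with decay weights extracting a factor $p^{2r+1}$ from clusters of large support. We would first bound the cluster sum rooted at a vertex by $\sum_k (cA_0)^k p^{(r-1)k+1}$ using the tree-graph bound on Ursell functions. We would then pass the bound to the derivative by a Cauchy estimate in $\lambda$ on a disc of radius comparable to $\lambda_0$.
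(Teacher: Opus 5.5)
\textbf{There is a genuine gap: the order-$p^{2r}$ enumeration fails for $r=3$.} With it goes your claim that the bound is ``an asymptotic identity with a one-sided error''. Linearity only forces two \emph{distinct} edges to share at most one vertex. Three edges therefore cover at least $3r-3$ vertices, not $3r-2$, and for $r=3$ this equals $6=2r$.

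Consequently some single polymers with three or four edges on six vertices are clusters with Ursell factor $1$ at order $p^6$. These are the Berge triangles, e.g.\ $\{1,2,3\},\{1,4,5\},\{2,4,6\}$, and the Pasch configurations, e.g.\ $\{1,2,3\},\{1,4,5\},\{2,4,6\},\{3,5,6\}$. Five edges on six vertices is impossible, since each vertex lies in at most two of them. These clusters contribute $(\tau_4-\tau_3)p^6$ to $\log Q_{\mathcal G}$.

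So the true $p^{2r}$ coefficient of $g_{\mathcal G}$ is $-\frac{dA_0}{2r}-\delta_{\mathcal G}$ with $\delta_{\mathcal G}=(\tau_3-\tau_4)/n$, not $-\frac{dA_0}{2r}$. Applying $\lambda\frac{d}{d\lambda}$ then produces an extra term $-2r\delta_{\mathcal G}(1-p)p^{2r}$ in $E_{\mathcal G}(\lambda)$. This term is of exact order $p^{2r}$ and cannot be absorbed into $O(p^{2r+1})$.

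The theorem survives only because this term is nonpositive, and your plan has no argument for that. You need $\tau_3\ge\tau_4$. It holds because each Pasch configuration contains exactly four Berge triangles, while a Berge triangle lies in at most one Pasch configuration: the fourth edge is forced to be its three vertices of degree one. Hence $\tau_3\ge 4\tau_4$. This sign is precisely why the statement is an inequality rather than an expansion. For $r\ge4$ your enumeration is correct, since $3r-3\ge 2r+1$.

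\textbf{Your convergence sketch rests on a false support bound.} You claim a polymer with $k$ edges covers at least $k(r-1)+1$ vertices, but a new edge can meet the union of the earlier edges in one vertex per earlier edge. The Pasch configuration has $k=4$ edges on $6<9$ vertices. If $\gamma$ is the whole edge set of a connected component, then $|U(\gamma)|=kr/d$ exactly; the Fano plane, for instance, has $7$ edges on $7$ vertices.

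The bound valid in general is $|U(\gamma)|\ge kr/d$, obtained by double counting incidences under $d$-regularity, so $|w_p(\gamma)|\le (p^{r/d})^{|\gamma|}$. This is exactly where the exponent $d/r$ in $\zeta=(32A_0)^{-d/r}$ comes from. Your rooted estimate $\sum_k (cA_0)^k p^{(r-1)k+1}$ must be replaced by one using this bound, together with your count of connected edge sets through a fixed edge.

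With that change, the Koteck\'y--Preiss condition holds for $p\le\zeta$, and you obtain the uniform per-vertex bound $\sum_j |b_j(\mathcal G)|\zeta^j\le M$. The remainder and its derivative are then controlled on $p\le\zeta/2$ as you intend. The paper does this termwise via $kt^k\le mt^m$ for $t=p/\zeta\le 1/2$, which is equivalent to your Cauchy-estimate approach.
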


Let $f$ and $h$ be functions defined for all sufficiently small $p>0$. We write $f(p)=O(h(p))$ as $p\to0^+$ if there exist constants $C>0$ and $p_0>0$ such that $|f(p)|\le C|h(p)|$ whenever $0<p\le p_0$.
The notation $O_{r,d}(\cdot)$ indicates that the constants $C$ and $p_0$ may depend only on $r$ and $d$.
Similarly, in $O_{\mathcal G}(\cdot)$ and $O_d(\cdot)$, the constants $C$ and $p_0$ may depend only on $\mathcal G$ and $d$, respectively.
If an interval for $p$ is explicitly specified, the corresponding $O$-estimate is understood to hold uniformly in the interval.
For convenience, we retain the full polynomial expressions arising from differentiation.
Their terms of degree at least $2r+1$ can equivalently be absorbed into the remainder.

The required analytic tools are the abstract polymer models and cluster expansion.

Cluster expansions are classical tools for studying thermodynamic quantities through series of connected contributions.
Gruber and Kunz~\cite{GK1971} developed a polymer system in which an activity is assigned to each polymer and an incompatibility relation is defined on the polymer set.
Depending on the model, polymers may represent connected particle groups, contours, or defects relative to a reference configuration~\cite{FP2007}.
The abstract system retains only the polymer set, the incompatibility relation, and the activities, without prescribing the internal structure of a polymer.
Koteck\'y and Preiss~\cite{KP1986} established a general sufficient condition for convergence, while Fern\'andez and Procacci~\cite{FP2007} developed sharper convergence criteria using tree-graph expansions.
Applications include expansions and estimates for partition functions and correlation functions in classical particle systems~\cite{JT2020,PT2012,PT2015}, as well as extensions to quantum gases~\cite{PU2009}.

The cluster expansion is a powerful and classical tool in the rigorous study of statistical mechanics.
It has also been applied in graph theory.
Jenssen, Perkins and Potukuchi~\cite{JP2020, JPP2022} used polymer models and cluster expansions in the enumeration and structural analysis of independent sets in the hypercube.
Davies, Jenssen and Perkins~\cite{DJP2021} applied this method to give a partial proof of the ``Upper Matching Conjecture'' of Friedland \textit{et al.}  \cite{FKM2008} and to maximize and minimize the number of independent sets and matchings of a given size in regular graphs of a given  girth.

Let $\mathcal{G}$ be a finite $d$-regular, $r$-uniform hypergraph.  We always assume that $p=\lambda/(1+\lambda)$,
that is, $\lambda=p/(1-p)$.
For brevity, let $
U_F = \bigcup_{e\in F} v(e)$
where $F\subseteq E(\mathcal{G})$. Define an auxiliary polynomial
\begin{equation}\label{eq:Q-definition}
Q_{\mathcal{G}}(p)
=\sum_{F\subseteq E(\mathcal{G})}
(-1)^{|F|}p^{|U_F|},
\end{equation}
where the empty hyperedge set contributes $1$.
Moreover, let
\[
g_{\mathcal{G}}(p)=\frac{\log Q_{\mathcal{G}}(p)}{|V(\mathcal{G})|}.
\]

Inspired by the cluster-expansion approach of Davies, Jenssen, and Perkins~\cite{DJP2021, JP2020}, we use this tool to establish small fugacity upper bounds on the occupancy and variance fractions, respectively, of linear regular uniform hypergraphs by comparing low-order coefficients and controlling the remainder uniformly in the number of vertices.
In Section~\ref{sec:cluster} we develop the polymer representation.
And in Section~\ref{sec:main-proofs} we establish coefficient bounds and prove these two results.
In Section~\ref{sec:comparison} two relevant results are provided to this topic.


\section{The abstract polymer system and the cluster expansion} \label{sec:cluster}

\ \indent In this section we establish some preliminary results related to the function $Q_{\mathcal G}(p)$ and introduce the cluster expansion and abstract polymer models.
Since the property of linearity is not yet used,  we denote by $\mathcal{G}$  a $d$-regular $r$-uniform hypergraph throughout this section.

\begin{lemma}\label{lem:normalization}
For every finite hypergraph $\mathcal{G}$, we have
\begin{equation}\label{eq:normalization}
Z_{\mathcal{G}}(\lambda)
=(1+\lambda)^{|V(\mathcal{G})|}Q_{\mathcal{G}}(p),
\end{equation}
\begin{equation}\label{eq:F-normalized}
F_{\mathcal{G}}(\lambda)=\log(1+\lambda)+g_{\mathcal{G}}(p).
\end{equation}
\end{lemma}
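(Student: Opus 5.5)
The plan is to expand the indicator that a vertex set is independent by inclusion--exclusion over hyperedges, and then to recognize the result as a probabilistic identity for a random subset in which each vertex is kept independently with probability $p$. Write $n=|V(\mathcal G)|$. For any $S\subseteq V(\mathcal G)$, the set $S$ is independent exactly when no hyperedge $e$ satisfies $v(e)\subseteq S$. Hence
\[
\mathbf 1[S\in\cI(\mathcal G)]=\prod_{e\in E(\mathcal G)}\bigl(1-\mathbf 1[v(e)\subseteq S]\bigr)=\sum_{F\subseteq E(\mathcal G)}(-1)^{|F|}\,\mathbf 1[U_F\subseteq S],
\]
since $\prod_{e\in F}\mathbf 1[v(e)\subseteq S]=\mathbf 1[U_F\subseteq S]$. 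Multiplying by $\lambda^{|S|}$ and summing over all $S\subseteq V(\mathcal G)$, we can interchange the two finite sums. This gives
\[
Z_{\mathcal G}(\lambda)=\sum_{F\subseteq E(\mathcal G)}(-1)^{|F|}\sum_{S\supseteq U_F}\lambda^{|S|}.
\]

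The inner sum is elementary. Choosing $S=U_F\cup T$ with $T\subseteq V(\mathcal G)\setminus U_F$ arbitrary yields $\lambda^{|U_F|}(1+\lambda)^{n-|U_F|}$. Factoring out $(1+\lambda)^n$ leaves $\bigl(\lambda/(1+\lambda)\bigr)^{|U_F|}=p^{|U_F|}$. Summing over $F$ then gives exactly $(1+\lambda)^nQ_{\mathcal G}(p)$, which is \eqref{eq:normalization}. The empty $F$ has $U_\emptyset=\emptyset$ and contributes $1$, consistent with the convention in \eqref{eq:Q-definition}.

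For \eqref{eq:F-normalized}, we take logarithms and divide by $n$, using the definition \eqref{eq:free-energy}. The only point to check is that $\log Q_{\mathcal G}(p)$ is defined. For $\lambda>0$ we have $Z_{\mathcal G}(\lambda)\ge 1$, and \eqref{eq:normalization} then gives $Q_{\mathcal G}(p)=Z_{\mathcal G}(\lambda)(1+\lambda)^{-n}>0$. Therefore $\log Z_{\mathcal G}(\lambda)=n\log(1+\lambda)+\log Q_{\mathcal G}(p)$, and dividing by $n$ gives the claim.

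There is no real obstacle in this argument. The one step needing care is the interchange of sums together with the identification $\prod_{e\in F}\mathbf 1[v(e)\subseteq S]=\mathbf 1[U_F\subseteq S]$. Equivalently, $Q_{\mathcal G}(p)$ is the probability that a $p$-random subset is independent, which is a useful interpretation to keep in mind for the later polymer representation.
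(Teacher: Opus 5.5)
Your proof is correct and is essentially the paper's argument run in the opposite direction. The paper starts from $(1+\lambda)^nQ_{\mathcal G}(p)$, expands $\lambda^{|U_F|}(1+\lambda)^{n-|U_F|}$ as a sum over supersets $S\supseteq U_F$, swaps the sums, and uses the binomial vanishing $\sum_{F\subseteq E(\mathcal G[S])}(-1)^{|F|}=\mathbf 1[m_S=0]$. You instead expand the indicator $\prod_{e}(1-\mathbf 1[v(e)\subseteq S])$ directly, which is the same identity, and your positivity argument for the logarithm step matches the paper's.
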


\begin{proof}
Write $n=|V(\mathcal{G})|$.
For each $F\subseteq E(\mathcal{G})$,
combining $p=\lambda/(1+\lambda)$ and \eqref{eq:Q-definition}, we have
\begin{align*}
(1+\lambda)^n Q_{\mathcal{G}}(p)
&=(1+\lambda)^n
	\sum_{F\subseteq E(\mathcal{G})}
	(-1)^{|F|}p^{|U_F|}\\
&=\sum_{F\subseteq E(\mathcal{G})}
	(-1)^{|F|}(1+\lambda)^n
	\left(\frac{\lambda}{1+\lambda}\right)^{|U_F|}\\
&=\sum_{F\subseteq E(\mathcal{G})}
	(-1)^{|F|}\lambda^{|U_F|}
	(1+\lambda)^{n-|U_F|}.
\end{align*}
The set $V(\mathcal{G})\setminus U_F$ has $n-|U_F|$ vertices.
Then
$ (1+\lambda)^{n-|U_F|} =\sum_{J\subseteq V(\mathcal{G})\setminus U_F}\lambda^{|J|}.$
Consequently,
\begin{align}\label{eq:normalization-containing-S}
\lambda^{|U_F|}(1+\lambda)^{n-|U_F|} =\sum_{J\subseteq V(\mathcal{G})\setminus U_F} \lambda^{|U_F|+|J|}
    =\sum_{U_F\subseteq S\subseteq V(\mathcal{G})} \lambda^{|S|}.
\end{align}
Hereafter, $S$ denotes an arbitrary (not necessarily independent) vertex subset of $\mathcal{G}$.

Substituting \eqref{eq:normalization-containing-S} into the above expression of $(1+\lambda)^n Q_{\mathcal{G}}(p)$ gives
\begin{align*}
(1+\lambda)^n Q_{\mathcal{G}}(p)
=\sum_{F\subseteq E(\mathcal{G})}(-1)^{|F|}
  \sum_{U_F\subseteq S\subseteq V(\mathcal{G})}
  \lambda^{|S|}
\end{align*}
where both sums are finite.
Therefore we have
\begin{equation}\label{eq:normalization-swap}
(1+\lambda)^n Q_{\mathcal{G}}(p)
=\sum_{S\subseteq V(\mathcal{G})}\lambda^{|S|}
	\sum_{\substack{F\subseteq E(\mathcal{G})\\U_F\subseteq S}}
	(-1)^{|F|}.
\end{equation}
For a fixed hyperedge set $F\subseteq E(\mathcal{G})$, we now have the equivalences
\begin{align*}
U_F\subseteq S
&\Longleftrightarrow \bigcup_{e\in F}v(e)\subseteq S
\Longleftrightarrow v(e)\subseteq S\text{ for every }e\in F
\Longleftrightarrow F\subseteq E(\mathcal{G}[S]).
\end{align*}
Thus \eqref{eq:normalization-swap} becomes
\begin{equation}\label{eq:normalization-induced-sum}
(1+\lambda)^n Q_{\mathcal{G}}(p)
=\sum_{S\subseteq V(\mathcal{G})}\lambda^{|S|}
	\sum_{F\subseteq E(\mathcal{G}[S])}(-1)^{|F|}.
\end{equation}
For brevity, let $m_S=|E(\mathcal{G}[S])|$.
If $m_S=0$, the only subset of $E(\mathcal{G}[S])$
is $F=\varnothing$, whose contribution is $1$.
If $m_S\ge1$, grouping subsets $F$ according to their cardinality gives
$$
\sum_{F\subseteq E(\mathcal{G}[S])}(-1)^{|F|} =\sum_{j=0}^{m_S}\binom{m_S}{j}(-1)^j
=(1-1)^{m_S}=0.
$$
Therefore,
$$
\sum_{F\subseteq E(\mathcal{G}[S])}(-1)^{|F|}
=
\begin{cases}
1,&m_S=0,\\
0,&m_S\ge1.
\end{cases}
$$
The condition $m_S=0$ means that no hyperedge of $\mathcal{G}$ is contained in $S$, that is, $S\in\cI(\mathcal{G})$.
Thus \eqref{eq:normalization-induced-sum} becomes
\begin{align*}
(1+\lambda)^n Q_{\mathcal{G}}(p)
=\sum_{S\subseteq V(\mathcal{G})}\lambda^{|S|} 	
\boldsymbol 1_{\{S\in\cI(\mathcal{G})\}}
=\sum_{I\in\cI(\mathcal{G})}\lambda^{|I|} =Z_{\mathcal{G}}(\lambda),
\end{align*}
which proves \eqref{eq:normalization}.

Finally, since the Equation \eqref{eq:normalization} is positive, $(1+\lambda)^{|V(\mathcal G)|}$ and $Q_{\mathcal G}(p)$ are positive.
Therefore,
$ \log Z_{\mathcal{G}}(\lambda) =n\log(1+\lambda)+\log Q_{\mathcal{G}}(p). $
So we have
\begin{align*}
F_{\mathcal{G}}(\lambda)
=\frac1n \log Z_{\mathcal{G}}(\lambda) =\log(1+\lambda)+\frac1n \log Q_{\mathcal{G}}(p)
=\log(1+\lambda)+g_{\mathcal{G}}(p),
\end{align*}
which is \eqref{eq:F-normalized}.
\end{proof}

Let $L(\mathcal{G})$ be the line graph of hypergraph $\mathcal{G}$: its vertex set is $E(\mathcal{G})$, and two distinct vertices are adjacent when the corresponding hyperedges intersect.
For any $e\in E(\mathcal{G})$ and each $v\in v(e)$, there are exactly $d-1$ hyperedges other than $e$ that contain $v$.
Every neighbor of $e$ in $L(\mathcal{G})$ is a hyperedge containing at least one vertex $v\in v(e)$.
Thus
$ d_{L(\mathcal{G})}(e) \le\sum_{v\in e}(d-1) =r(d-1).$
So
\begin{equation}\label{eq:line-degree}
\Delta(L(\mathcal{G}))\le r(d-1)=A_0-1.
\end{equation}


Next we will define the abstract polymer system that will be used in this paper.
Here we use the abstract framework of Koteck\'y and Preiss \cite{KP1986}, restricted to a finite polymer set.

\begin{definition}
\label{def:abstract-polymer-model}
\cite{GK1971,KP1986}
A finite abstract polymer system consists of a finite set $\mathcal P$, a symmetric and reflexive relation $\not\sim$ on $\mathcal P$ with a function $z:\mathcal P\to\mathbb C$.
The elements $\gamma$ of $\mathcal P$ are called \emph{polymers}, and $z(\gamma)$ is called the \emph{activity} of $\gamma$.
Two polymers $\gamma,\gamma'$ are \emph{incompatible} if $\gamma\not\sim\gamma'$;
otherwise, they are \emph{compatible}, written as $\gamma\sim\gamma'$.
In particular, every polymer is incompatible with itself.
A subset $\Gamma\subseteq\mathcal P$ is called \emph{compatible} if every two distinct members of $\Gamma$ are compatible.
The partition function of this system is
$\Xi_{\mathcal P}(\boldsymbol z)
= \sum\limits_{\Gamma'\subset\mathcal P} \prod\limits_{\gamma\in\Gamma'}z(\gamma)
$
where $\Gamma'$ is compatible and the empty  family contributes $1$.
\end{definition}

Now  we define polymers on the line graph of a hypergraph $\mathcal{G}$ and study their related properties.
A \emph{polymer} is a nonempty hyperedge set $\gamma\subseteq E(\mathcal{G})$ such that the subhypergraph with vertex set $\bigcup_{e\in\gamma}v(e)$ and hyperedge set $\gamma$ is connected.
Equivalently, a nonempty hyperedge set $\gamma$ is a polymer if and only if the induced subgraph $L(\mathcal{G})[\gamma]$ of the line graph $L(\mathcal{G})$ is connected.
Let $\mathcal P(\cG)$ be the set of polymers.
Sometimes we denote $\mathcal P(\cG)$ by $\mathcal P$ if $\cG$ is clear from context.
For brevity, let
$U(\gamma)=\bigcup_{e\in\gamma}v(e)$ and
define the activity of $\gamma$ as
$w_p(\gamma):=(-1)^{|\gamma|}p^{|U(\gamma)|}$.
Two polymers $\gamma,\gamma'$ are \emph{compatible}, written as $\gamma\sim\gamma'$,
if the sets of vertices they cover in $\mathcal{G}$ are disjoint, that is, $U(\gamma)\cap U(\gamma')=\varnothing$.
Equivalently, two polymers $\gamma,\gamma'$ are compatible if and only if they are disjoint as vertex sets of $L(\mathcal{G})$ and no edge of $L(\mathcal{G})$ joins a vertex of $\gamma$ to a vertex of $\gamma'$.
In all other cases they are \textit{incompatible}, that is, $U(\gamma)\cap U(\gamma')\neq\varnothing$;
 denoted by $\gamma\not\sim\gamma'$.
In particular, incompatibility is reflexive. So a compatible family cannot contain two copies of the same polymer.
Let $\mathcal P(\mathcal G)^m$ denote the finite set of all ordered polymer tuples $\widetilde{\boldsymbol\gamma}=(\gamma_1,\ldots,\gamma_m)$ of $\mathcal G$, that is, $\widetilde{\boldsymbol\gamma}\in\mathcal P(\mathcal G)^m$ and let $\Gamma$ be a subset of $\mathcal P(\mathcal G)$.

Let $\widetilde{\boldsymbol\gamma}=(\gamma_1,\ldots,\gamma_m)$.
Define the simple graph $G_{\widetilde{\boldsymbol\gamma}}$ with vertex set $[m]=\{1,\ldots,m\}$ and edge set
\(
\bigl\{\{i,j\}:1\le i<j\le m, U(\gamma_i)\cap U(\gamma_j)\ne\varnothing\bigr\}.
\)
Thus two distinct vertices of $G_{\widetilde{\boldsymbol\gamma}}$ are adjacent precisely when
the corresponding polymers are incompatible.
The \emph{Ursell coefficient} \cite{FP2007} of $\widetilde{\boldsymbol\gamma}$ is
\begin{equation}\label{eq:ursell}
\phi(\widetilde{\boldsymbol\gamma}) = \sum_{H\subseteq G_{\widetilde{\boldsymbol\gamma}}} (-1)^{|E(H)|},
\end{equation}
where $H$ is a connected spanning subgraph of $G_{\widetilde{\boldsymbol\gamma}}$.
A graph consisting of a single vertex is viewed as connected, so $\phi(\gamma_1)=1$.
If $G_{\widetilde{\boldsymbol\gamma}}$ is disconnected, the sum is empty with $\phi(\widetilde{\boldsymbol\gamma})=0$.

Below we show that the polynomial $Q_{\mathcal G}(p)$, which is defined in $(\ref{eq:Q-definition})$, is the abstract polymer partition function.
\begin{lemma}\label{lem:connected-decomposition}
Let $\mathcal{G}$ be a $d$-regular $r$-uniform hypergraph with $Q_{\mathcal{G}}(p)$ as defined in $(\ref{eq:Q-definition})$. Then
\begin{equation}\label{eq:polymer-partition}
Q_{\mathcal{G}}(p) =\sum_{\Gamma'\subseteq\mathcal P(\mathcal G)} \prod_{\gamma\in\Gamma'}w_p(\gamma),
\end{equation}
where $w_p(\gamma)=(-1)^{|\gamma|}p^{|U(\gamma)|}$,
the sum ranges over all subsets consisting of pairwise compatible  edge subsets of $\mathcal{G}$,
 and the empty family contributes $1$.
\end{lemma}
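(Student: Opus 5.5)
We prove the identity by putting the terms on the two sides in bijection: each term of \eqref{eq:Q-definition} is indexed by a hyperedge set $F\subseteq E(\mathcal G)$, and each term on the right of \eqref{eq:polymer-partition} by a compatible family $\Gamma'\subseteq\mathcal P(\mathcal G)$. We will construct a bijection between hyperedge subsets $F$ and compatible polymer families $\Gamma'$ under which $(-1)^{|F|}p^{|U_F|}=\prod_{\gamma\in\Gamma'}w_p(\gamma)$. Summing over $F$ then gives the lemma.

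\textbf{Forward map.} Given $F$, consider the induced subgraph $L(\mathcal G)[F]$ of the line graph and let $\gamma_1,\dots,\gamma_k$ be the vertex sets of its connected components. Put $\Phi(F)=\{\gamma_1,\dots,\gamma_k\}$; for $F=\varnothing$ this is the empty family. Each $\gamma_i$ is nonempty and induces a connected subgraph of $L(\mathcal G)$, so by the equivalent description of polymers each $\gamma_i\in\mathcal P(\mathcal G)$.

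The family $\Phi(F)$ is compatible. For $i\ne j$, no edge of $L(\mathcal G)$ joins $\gamma_i$ to $\gamma_j$, since otherwise they would lie in the same component. Hence no hyperedge of $\gamma_i$ meets a hyperedge of $\gamma_j$, so $U(\gamma_i)\cap U(\gamma_j)=\varnothing$.

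\textbf{Inverse map.} Given a compatible family $\Gamma'$, let $\Psi(\Gamma')=\bigcup_{\gamma\in\Gamma'}\gamma$. We check the two compositions.
\begin{itemize}
\item $\Psi\circ\Phi=\mathrm{id}$, because the components of $L(\mathcal G)[F]$ partition $F$.
\item $\Phi\circ\Psi=\mathrm{id}$. Distinct members of $\Gamma'$ are disjoint as hyperedge sets and have no $L(\mathcal G)$-edges between them, since their vertex sets $U(\cdot)$ are disjoint. Each member is itself connected in $L(\mathcal G)$. So the connected components of $L(\mathcal G)[\Psi(\Gamma')]$ are exactly the members of $\Gamma'$.
\end{itemize}

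\textbf{Weights.} Let $F$ correspond to $\{\gamma_1,\dots,\gamma_k\}$. Then $|F|=\sum_i|\gamma_i|$, because the $\gamma_i$ partition $F$. Also $U_F=\bigsqcup_i U(\gamma_i)$ is a disjoint union, by compatibility, so $|U_F|=\sum_i|U(\gamma_i)|$. Therefore
\[
(-1)^{|F|}p^{|U_F|}=\prod_{i=1}^k(-1)^{|\gamma_i|}p^{|U(\gamma_i)|}=\prod_{\gamma\in\Gamma'}w_p(\gamma).
\]

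The only point needing care is the vertex-count additivity $|U_F|=\sum_i|U(\gamma_i)|$. It relies on compatibility meaning disjointness of the covered vertex sets, and not merely of the hyperedge sets; this is exactly why the polymer incompatibility was defined via $U(\gamma)\cap U(\gamma')\neq\varnothing$, which coincides with adjacency or intersection in $L(\mathcal G)$.

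The regularity and uniformity hypotheses are not used.
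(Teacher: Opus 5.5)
Your proof is correct and follows essentially the same route as the paper: you decompose $F$ into the connected components of $L(\mathcal G)[F]$, check that these are pairwise compatible polymers with disjoint vertex unions so the weights multiply, and recover $F$ uniquely as the union of a compatible family. Checking both compositions $\Psi\circ\Phi$ and $\Phi\circ\Psi$ explicitly only formalizes the paper's ``each edge subset is counted exactly once'' step.
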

\begin{proof}
Fix $F\subseteq E(\mathcal{G})$ and let $\gamma_1,\ldots,\gamma_t$ be the vertex sets of the connected components of the induced graph $L(\mathcal{G})[F]$.
Each $\gamma_i$ is nonempty and induces a connected subgraph of $L(\mathcal{G})$, so it is a polymer.
If $i\ne j$, then $\gamma_i\cap\gamma_j=\varnothing$.
Moreover, no edge of $L(\mathcal{G})$ joins $\gamma_i$ to $\gamma_j$, because such an edge would put them in the same connected component.
Thus the polymers $\gamma_1,\ldots,\gamma_t$ are pairwise compatible.

The vertex unions of the polymers $\gamma_1,\ldots,\gamma_t$ in the original hypergraph $\mathcal{G}$ are also disjoint.
Indeed, if a vertex $v$ belongs to both $U(\gamma_i)$ and $U(\gamma_j)$, then there are hyperedges $e\in\gamma_i$ and $f\in\gamma_j$ with $v\in e\cap f$.
The hyperedges $e$ and $f$ are then adjacent in $L(\mathcal{G})$, contradicting the above argument.
Therefore, from
$F=\mathbin{\bigcup}_{i=1}^t\gamma_i$
and $U_F=\mathbin{\bigcup}_{i=1}^tU(\gamma_i)$,
we have
$ |F|=\sum_{i=1}^t|\gamma_i| $
and
$ |U_F|=\sum_{i=1}^t|U(\gamma_i)|. $
Thus
\begin{align*}
(-1)^{|F|}p^{|U_F|}
&=(-1)^{\sum_i|\gamma_i|} p^{\sum_i|U(\gamma_i)|}
    =\prod_{i=1}^t (-1)^{|\gamma_i|}p^{|U(\gamma_i)|}
    =\prod_{i=1}^t w_p(\gamma_i).
\end{align*}

Conversely, let $\Gamma'\subseteq\mathcal P(\mathcal{G})$ be formed of pairwise compatible polymers with
$F=\bigcup_{\gamma\in\Gamma'}\gamma$.
The members of $\Gamma'$ are disjoint connected vertex sets of $L(\mathcal{G})[F]$, with no edges between distinct members.
They are therefore exactly the vertex sets of the connected components of $L(\mathcal{G})[F]$.

Thus the sum over all hyperedge subsets $F\subseteq E(\mathcal{G})$ can be written as a sum over all pairwise compatible families.
Each edge subset is counted exactly once,
since its decomposition into connected components in the line graph is unique.
The empty edge set corresponds to the empty product and contributes $1$.
Therefore, we have
\[
\sum_{F\subseteq E(\mathcal{G})} (-1)^{|F|}p^{|\bigcup_{e\in F}v(e)|}
=\sum_{\Gamma'} \prod_{\gamma\in\Gamma'}w_p(\gamma),
\]
which is \eqref{eq:polymer-partition}.
\end{proof}

By Lemma~\ref{lem:connected-decomposition} and the definition of $\Xi_{\mathcal P}(\boldsymbol z)$, $Q_{\mathcal{G}}(p)$ is the partition function of the abstract polymer model with activity $z(\gamma)=w_p(\gamma)$.
Moreover, by \eqref{eq:normalization}, we have $Q_{\cG}(p)>0$.
Although $p>0$ in the original model, $Q_{\mathcal G}(p)$ is a polynomial with $Q_{\mathcal G}(0)=1$.
We can  extend $\log Q_{\mathcal G}(p)$ continuously to $p=0$ by setting $\log Q_{\mathcal G}(0)=0$. Then we have the following results.

\begin{lemma}
\label{lem:cluster-expansion}
\cite{FP2007, KP1986}
For the polymer model defined above, let $\zeta$ be as in
\eqref{eq:A0-rho}.
For every real $p\in[0,\zeta]$, we have
\begin{equation*}\label{eq:cluster-log}
\log Q_{\mathcal G}(p) =
\sum_{m\ge1}\frac1{m!} \sum_{\widetilde{\boldsymbol\gamma}\in\mathcal P(\mathcal G)^m}
\phi(\widetilde{\boldsymbol\gamma}) \prod_{i=1}^m w_p(\gamma_i),
\end{equation*}
where the series converges absolutely.
\end{lemma}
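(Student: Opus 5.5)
We will derive the statement from the Koteck\'y--Preiss convergence criterion \cite{KP1986}, applied to the polymer model of Lemma~\ref{lem:connected-decomposition}. This model has polymers $\gamma\in\mathcal P(\mathcal G)$, activities $w_p(\gamma)$, and incompatibility $U(\gamma)\cap U(\gamma')\neq\varnothing$. The criterion reads as follows. Suppose there are functions $a,b\colon\mathcal P\to[0,\infty)$ such that for every polymer $\gamma$
\begin{equation*}
\sum_{\gamma'\not\sim\gamma}|w_p(\gamma')|\,e^{a(\gamma')+b(\gamma')}\le a(\gamma).
\end{equation*}
Then the cluster expansion for $\log\Xi_{\mathcal P}$ converges absolutely, and it equals $\log Q_{\mathcal G}(p)$ by Lemma~\ref{lem:connected-decomposition}. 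The identification with the principal branch of the logarithm holds because $Q_{\mathcal G}(0)=1$, $Q_{\mathcal G}>0$ on $[0,\zeta]$, and the series is continuous in $p$ there, so it agrees with the real logarithm by continuity from $p=0$. The job is therefore to choose $a$ and $b$ and verify the inequality uniformly for $p\in[0,\zeta]$.

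We take $a(\gamma)=|\gamma|$ (or $c|\gamma|$ for a small constant $c$) and $b\equiv0$, and bound the left-hand side in three steps.

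\emph{Step 1 (reduction to single hyperedges).} A polymer $\gamma'$ is incompatible with $\gamma$ only if $\gamma'$ contains a hyperedge $f$ meeting $U(\gamma)$. The vertex set $U(\gamma)$ has at most $r|\gamma|$ vertices, each lying in $d$ hyperedges, so there are at most $rd|\gamma|$ choices of $f$. It therefore suffices to show that for each fixed hyperedge $f$,
\begin{equation*}
\sum_{\gamma'\ni f}p^{|U(\gamma')|}e^{|\gamma'|}\le \frac{1}{rd}.
\end{equation*}

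\emph{Step 2 (counting connected sets).} The polymers of size $k$ containing $f$ are connected $k$-vertex subsets of $L(\mathcal G)$ containing $f$. By \eqref{eq:line-degree}, $L(\mathcal G)$ has maximum degree at most $A_0-1$. The standard tree-counting bound then gives at most $(e(A_0-1))^{k-1}\le (eA_0)^{k-1}$ such sets.

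\emph{Step 3 (vertex count).} We need a lower bound on $|U(\gamma')|$ in terms of $k=|\gamma'|$. By double counting, the $k$ hyperedges contribute $rk$ vertex–hyperedge incidences, and each vertex has degree at most $d$. Hence $|U(\gamma')|\ge rk/d$, which gives $p^{|U(\gamma')|}\le p^{rk/d}$. (We have $p\le1$, so the exponent bound goes the right way.)

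Combining the three steps bounds the sum by $\sum_{k\ge1}(eA_0)^{k-1}e^k\zeta^{rk/d}$. With $\zeta^{r/d}=(32A_0)^{-1}$, this is a geometric series with ratio $e^2A_0/(32A_0)=e^2/32<1/4$. It sums to at most $\frac{e}{32A_0}\cdot\frac{4}{3}$, which is roughly $0.11/A_0$.

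The remaining check compares this with $1/(rd)$. We have $A_0=r(d-1)+1\ge rd/2$ because $d\ge2$, so we need roughly $0.11\cdot 2/(rd)\le 1/(rd)$, which holds.

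The main obstacle is this constant bookkeeping: making the choice of $a$, the $e^{a}$ factor, the $rd|\gamma|$ factor from Step 1, and the degree-based lower bound on $|U(\gamma')|$ all fit within the constant $32$. If $a(\gamma)=|\gamma|$ is too costly, we will switch to $a(\gamma)=|\gamma|/2$, or to the Fern\'andez--Procacci criterion \cite{FP2007}, which has slack. Absolute convergence of the series in the statement then follows directly from the criterion.
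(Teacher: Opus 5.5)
Your proposal is correct and follows essentially the paper's route. The paper states this lemma with a citation to Koteck\'y--Preiss and Fern\'andez--Procacci, and inside the proof of Proposition~\ref{prop:coefficient-bound} it verifies the same condition \eqref{eq:KP-condition} with $a(\gamma)=|\gamma|$. It uses your three ingredients: $|U(\gamma)|\ge kr/d$, a connected-subset count in $L(\mathcal G)$, and reduction to rooted sums over a neighbourhood of $\gamma$. The differences are only in constants: the paper uses $A_0|\gamma|$ where you use $rd|\gamma|$, and $(4A_0)^{k-1}$ where you use $(eA_0)^{k-1}$. Your arithmetic already closes with $a(\gamma)=|\gamma|$, using $d\ge2$ to get $A_0\ge rd/2$, so the fallback to $a(\gamma)=|\gamma|/2$ or to Fern\'andez--Procacci is not needed.
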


\begin{lemma}\cite{FP2007, KP1986}
\label{lem:KP-pinned}
Fix a real $p\in[0,1)$. Suppose that, for every
$\gamma\in\mathcal P(\mathcal G)$,
\begin{equation}\label{eq:KP-condition}
\sum_{\substack{\gamma'\in\mathcal P(\mathcal G)\\ \gamma'\not\sim\gamma}}
|w_p(\gamma')|e^{|\gamma'|}\le|\gamma|.
\end{equation}
Then, for every fixed $\gamma_0\in\mathcal P(\mathcal G)$,
\begin{equation}\label{eq:standard-pinned-bound}
\sum_{n\ge0}\frac1{n!}
\sum_{(\gamma_1,\ldots,\gamma_n)\in\mathcal P(\mathcal G)^n}
|\phi(\gamma_0,\gamma_1,\ldots,\gamma_n)|
\prod_{i=1}^n|w_p(\gamma_i)|
\le e^{|\gamma_0|},
\end{equation}
where the term with $n=0$ is $1$.
\end{lemma}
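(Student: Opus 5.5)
The plan is to bound the Ursell coefficients by counting spanning trees (the Penrose tree-graph inequality), and then to sum over rooted trees by induction on depth, using \eqref{eq:KP-condition} at each vertex of the tree. This is the Fern\'andez--Procacci proof of the Koteck\'y--Preiss criterion, specialised to the weight $a(\gamma)=|\gamma|$. Since $\mathcal P(\mathcal G)$ is finite and $p$ is a fixed real, every quantity below is a nonnegative series, so rearrangements are free.

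\textbf{Step 1 (tree-graph inequality).} For a tuple $(\gamma_0,\ldots,\gamma_n)$ with incompatibility graph $G=G_{(\gamma_0,\ldots,\gamma_n)}$, we use Penrose's partition of the connected spanning subgraphs of $G$ into Boolean intervals $[T,R(T)]$, one for each spanning tree $T$. This gives
\[
|\phi(\gamma_0,\ldots,\gamma_n)|\le \tau(G),
\]
the number of spanning trees of $G$. We can quote this from \cite{FP2007} or reprove it via Penrose's map. Hence the left side of \eqref{eq:standard-pinned-bound} is at most
\[
\sum_{n\ge0}\frac1{n!}\sum_{T}\sum_{(\gamma_1,\ldots,\gamma_n)}\prod_{\{i,j\}\in E(T)}\mathbf 1[\gamma_i\not\sim\gamma_j]\prod_{i=1}^n|w_p(\gamma_i)|.
\]
Here $T$ ranges over labelled trees on $\{0,\ldots,n\}$ and the polymers are summed freely. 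Reflexivity of $\not\sim$ means repeated polymers are allowed, which is harmless.

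\textbf{Step 2 (reorganise as rooted trees and truncate).} Root each $T$ at $0$ and orient the edges away from the root. The factor $1/n!$ over labelled trees then converts into a product of factors $1/k_v!$, where $k_v$ is the number of children of vertex $v$ and the children are treated as unordered. This is the standard exponential-generating-function identity for rooted labelled trees.

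Next, let $S_N(\gamma)$ denote the contribution of trees of depth at most $N$ rooted at a polymer $\gamma$. Each $S_N(\gamma)$ is finite. The quantity in Step 1 equals $\lim_{N\to\infty}S_N(\gamma_0)$ by monotone convergence, so it suffices to show $S_N(\gamma)\le e^{|\gamma|}$ for all $N$ and all $\gamma$.

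\textbf{Step 3 (induction on depth).} For the base case, $S_0(\gamma)=1\le e^{|\gamma|}$. For the inductive step, a tree of depth at most $N$ rooted at $\gamma$ consists of $k$ unordered children $\gamma'_1,\ldots,\gamma'_k$, each incompatible with $\gamma$ and each carrying a subtree of depth at most $N-1$. Therefore
\[
S_N(\gamma)=\sum_{k\ge0}\frac1{k!}\Bigl(\sum_{\gamma'\not\sim\gamma}|w_p(\gamma')|\,S_{N-1}(\gamma')\Bigr)^k
=\exp\Bigl(\sum_{\gamma'\not\sim\gamma}|w_p(\gamma')|\,S_{N-1}(\gamma')\Bigr).
\]
Applying the induction hypothesis $S_{N-1}(\gamma')\le e^{|\gamma'|}$ and then \eqref{eq:KP-condition} gives $S_N(\gamma)\le\exp(|\gamma|)$, which closes the induction.

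\textbf{Main obstacle.} The main point requiring care is the bookkeeping in Step 2. We must check that the $1/n!$ weighting over labelled trees on $\{0,\ldots,n\}$ with root $0$ matches exactly the recursive $\sum_k\frac1{k!}(\cdots)^k$ structure. The relabelling of the non-root vertices is what produces the product of $1/k_v!$ factors. We must also confirm that allowing $\gamma'=\gamma$, and repeated polymers in general, is consistent with how $\phi$ and $\mathcal P(\mathcal G)^n$ are defined. Both points are standard, and the Penrose inequality in Step 1 can be taken from \cite{FP2007}.
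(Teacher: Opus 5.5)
Your proposal is correct. The paper gives no proof of this lemma and simply imports it from \cite{KP1986,FP2007}; your argument is exactly the Fern\'andez--Procacci proof of the Koteck\'y--Preiss criterion specialised to $a(\gamma)=|\gamma|$, namely Penrose's tree-graph bound $|\phi(\gamma_0,\ldots,\gamma_n)|\le\tau(G_{(\gamma_0,\ldots,\gamma_n)})$, then the conversion of $1/n!$-weighted labelled trees rooted at $0$ into plane rooted trees with weights $\prod_v 1/k_v!$, then the depth recursion $S_N(\gamma)=\exp\bigl(\sum_{\gamma'\not\sim\gamma}|w_p(\gamma')|S_{N-1}(\gamma')\bigr)$ closed by \eqref{eq:KP-condition}. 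It matches the paper's conventions (no $1/n!$ inside $\phi$, repeated polymers allowed through reflexivity of $\not\sim$, root left unweighted), so it is a valid self-contained proof of the stated bound.
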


\begin{lemma}\label{lem:abstract-cluster}
Fix a real number $p\in[0,1)$. If
$\sum\limits_{\substack{\gamma'\in\mathcal P(\mathcal G)\\ \gamma'\not\sim\gamma}}
|w_p(\gamma')|e^{|\gamma'|}\le|\gamma|$ for every
$\gamma\in\mathcal P(\mathcal G)$,
then
\begin{equation}\label{eq:absolute-cluster}
\sum_{m\ge1}\frac1{m!}
\sum_{\widetilde{\boldsymbol\gamma}\in\mathcal P(\mathcal G)^m}
|\phi(\widetilde{\boldsymbol\gamma})| \prod_{i=1}^m|w_p(\gamma_i)|
\le \sum_{\gamma\in\mathcal P(\mathcal G)} |w_p(\gamma)|e^{|\gamma|}.
\end{equation}
\end{lemma}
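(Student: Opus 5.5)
The plan is to derive Lemma~\ref{lem:abstract-cluster} directly from the pinned bound of Lemma~\ref{lem:KP-pinned}. The hypothesis of Lemma~\ref{lem:abstract-cluster} is exactly condition \eqref{eq:KP-condition}, so \eqref{eq:standard-pinned-bound} is available for every fixed root polymer $\gamma_0$. The idea is to isolate the first coordinate of each ordered tuple: for $m\ge1$, write $\widetilde{\boldsymbol\gamma}=(\gamma_1,\ldots,\gamma_m)=(\gamma_0',\gamma_2,\ldots,\gamma_m)$ with $\gamma_0'=\gamma_1$, and then reindex.

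First, sum over the first coordinate separately. For each $m\ge1$,
\[
\frac1{m!}\sum_{\widetilde{\boldsymbol\gamma}\in\mathcal P^m}|\phi(\widetilde{\boldsymbol\gamma})|\prod_{i=1}^m|w_p(\gamma_i)|
\le\frac1{(m-1)!}\sum_{\gamma_1\in\mathcal P}|w_p(\gamma_1)|\sum_{(\gamma_2,\ldots,\gamma_m)\in\mathcal P^{m-1}}|\phi(\gamma_1,\gamma_2,\ldots,\gamma_m)|\prod_{i=2}^m|w_p(\gamma_i)|.
\]
This holds because $1/m!\le 1/(m-1)!$; the factor lost is $1/m$, so this is harmless. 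Next, sum over $m\ge1$ and set $n=m-1$. All terms are nonnegative, so Tonelli lets us interchange the finite outer sum over $\gamma_1\in\mathcal P$ with the series in $n$. The inner quantity is then exactly the left side of \eqref{eq:standard-pinned-bound} with $\gamma_0=\gamma_1$. The $n=0$ term is $|\phi(\gamma_1)|=1$, which matches the convention stated there.

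Applying Lemma~\ref{lem:KP-pinned} bounds the inner series by $e^{|\gamma_1|}$. Hence the whole left side of \eqref{eq:absolute-cluster} is at most $\sum_{\gamma\in\mathcal P}|w_p(\gamma)|e^{|\gamma|}$, which is the claim. Because $\mathcal P$ is finite, the right side is finite, and this also gives absolute convergence.

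I expect no real obstacle here. The only point needing care is bookkeeping: the Ursell coefficient and the tuple sums in Lemma~\ref{lem:KP-pinned} must be the same objects as in \eqref{eq:ursell}. This holds because $\phi(\gamma_0,\gamma_1,\ldots,\gamma_n)$ is the Ursell coefficient of an $(n+1)$-tuple, built from $G_{\widetilde{\boldsymbol\gamma}}$ in the same way. The interchange of summation is justified by nonnegativity of every term.
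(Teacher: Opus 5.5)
Your proposal is correct and follows essentially the same route as the paper. The paper also isolates the first coordinate, proves $\sum_{\gamma_0}|w_p(\gamma_0)|A(\gamma_0)=\sum_{m\ge1}m\sigma_m$ by reindexing $m=n+1$, applies Lemma~\ref{lem:KP-pinned}, and then uses $\sigma_m\le m\sigma_m$. That last step is exactly your termwise use of $1/m!\le 1/(m-1)!$; you just apply it before the reindexing rather than after.
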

\begin{proof}
For every fixed $\gamma_0\in\mathcal P(\mathcal G)$, let $A(\gamma_0)$ denote the left-hand side of
\eqref{eq:standard-pinned-bound}, i.e.,
\[
A(\gamma_0) := \sum_{n\ge0}\frac1{n!}
\sum_{(\gamma_1,\ldots,\gamma_n)\in\mathcal P^n}
|\phi(\gamma_0,\gamma_1,\ldots,\gamma_n)| \prod_{i=1}^n|w_p(\gamma_i)|.
\]
For $m\ge 1$, define
$
\sigma_m=\frac1{m!}
\sum_{\widetilde{\boldsymbol\gamma}\in\mathcal P^m}
|\phi(\widetilde{\boldsymbol\gamma})|\prod_{i=1}^m|w_p(\gamma_i)|$.

Reindexing with $m=n+1$ and relabelling the tuple entries, we obtain
\begin{align}
\sum_{\gamma_0\in \mathcal P(\mathcal G)}|w_p(\gamma_0)|A(\gamma_0) =&\sum_{n\ge0}\frac1{n!}
\sum_{\gamma_0\in\mathcal P(\mathcal G)} \sum_{(\gamma_1,\ldots,\gamma_n)\in\mathcal P(\mathcal G)^n}
|\phi(\gamma_0,\ldots,\gamma_n)|
\prod_{i=0}^n|w_p(\gamma_i)|\notag\\
=&\sum_{n\ge0}\frac1{n!}
\sum_{(\gamma_0,\ldots,\gamma_n)\in\mathcal P(\mathcal G)^{n+1}}
|\phi(\gamma_0,\ldots,\gamma_n)|
\prod_{i=0}^n|w_p(\gamma_i)|\notag\\
=&\sum_{m\ge1}\frac1{(m-1)!}
\sum_{(\gamma_1,\ldots,\gamma_m)\in\mathcal P(\mathcal{G})^m}
|\phi(\widetilde{\boldsymbol\gamma})| \prod_{i=1}^m|w_p(\gamma_i)| \notag\\
=&\sum_{m\ge1}m\sigma_m. \label{wA=msig}
\end{align}

By Lemma \ref{lem:KP-pinned}, we obtain
\begin{align}\label{wA<we}
\sum_{\gamma_0\in\mathcal P(\mathcal G)} |w_p(\gamma_0)|A(\gamma_0)
\le \sum_{\gamma_0\in\mathcal P(\mathcal G)} |w_p(\gamma_0)|e^{|\gamma_0|}.
\end{align}

Since $\sigma_m\ge0$ and $m\ge1$, by \eqref{wA=msig} and \eqref{wA<we}, we have
\begin{align*}
\sum_{m\ge1}\sigma_m &\le \sum_{m\ge1}m\sigma_m
    = \sum_{\gamma_0\in\mathcal P(\mathcal G)} |w_p(\gamma_0)|A(\gamma_0) \\
&\le \sum_{\gamma_0\in\mathcal P(\mathcal G)} |w_p(\gamma_0)|e^{|\gamma_0|}.
\end{align*}
This proves \eqref{eq:absolute-cluster}.
\end{proof}

\section{Proofs of Main Results}
{\bl \label{sec:main-proofs}}

In this section we will prove two main results.
We first provide some lemmas that will be used.

\begin{lemma}\cite{Jukna2011}
\label{nm}
Let $a$ and $b$ be positive integers, and $e$ be the base of the natural logarithm. Then
$$
\binom{a}{b}\le\left(\frac{ea}{b}\right)^b
$$
\end{lemma}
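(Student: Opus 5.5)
The plan is to compare $\binom{a}{b}$ with $a^b/b!$ and then bound $b!$ from below by $(b/e)^b$.

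First, if $b>a$ then $\binom{a}{b}=0$ and the inequality is trivial, since the right-hand side is positive. So assume $1\le b\le a$. Writing
\[
\binom{a}{b}=\frac{a(a-1)\cdots(a-b+1)}{b!},
\]
each of the $b$ factors in the numerator is at most $a$, which gives $\binom{a}{b}\le a^b/b!$.

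It then suffices to show $b!\ge (b/e)^b$, or equivalently $b^b/b!\le e^b$. For this I use the exponential series. Since every term of $e^b=\sum_{k\ge0}b^k/k!$ is nonnegative, the single term with $k=b$ satisfies $b^b/b!\le e^b$. Combining the two steps,
\[
\binom{a}{b}\le \frac{a^b}{b!}\le a^b\cdot\frac{e^b}{b^b}=\Bigl(\frac{ea}{b}\Bigr)^b .
\]

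There is no real obstacle here. The only point needing care is the lower bound on $b!$, and the exponential series handles it in one line without Stirling's formula.
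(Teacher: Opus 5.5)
Your proof is correct and complete, including the separate handling of the case $b>a$. The paper gives no proof of its own here and simply cites Jukna; your argument, $\binom{a}{b}\le a^b/b!$ combined with $b^b/b!\le e^b$ read off from the exponential series, is the standard textbook proof of this bound.
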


\begin{lemma}\cite{BCKL2013}
\label{refBCKL}
Let $G$ be a graph with maximum degree $\Delta$ with $v \in V(G)$.
The number of connected induced subgraphs of $G$ with $k$ vertices containing $v$ is at most $\frac{1}{k}\binom{k\Delta}{k-1}$.
\end{lemma}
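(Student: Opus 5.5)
The plan is to injectively encode every connected induced subgraph on $k$ vertices containing $v$ as a $k$-node subtree, containing the root, of the infinite rooted $\Delta$-ary tree, and then to count such subtrees exactly. Since the count will turn out to be the Fuss--Catalan number $\frac{1}{(\Delta-1)k+1}\binom{\Delta k}{k}$, the lemma then follows from the identity
\[
\frac{1}{(\Delta-1)k+1}\binom{\Delta k}{k}=\frac1k\binom{\Delta k}{k-1},
\]
which holds because $\binom{\Delta k}{k}=\binom{\Delta k}{k-1}\frac{\Delta k-k+1}{k}$.

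\emph{Step 1 (encoding).} For every vertex $u$ fix once and for all an ordering of $N_G(u)$, so each neighbour of $u$ receives a label in $\{1,\dots,\Delta\}$. Let $T_\Delta$ be the infinite rooted tree in which every node has $\Delta$ child slots labelled $1,\dots,\Delta$. Given a connected vertex set $S\ni v$ with $|S|=k$, run breadth-first search in $G[S]$ from $v$, scanning the neighbours of each processed vertex in the fixed order. This produces a spanning tree of $G[S]$ rooted at $v$. Map $v$ to the root of $T_\Delta$. When BFS discovers a new vertex $w$ from $u$, and $w$ is the $i$-th neighbour of $u$, map $w$ to the $i$-th child of the image of $u$. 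The image is a subtree $\tau(S)$ of $T_\Delta$ with $k$ nodes containing the root.

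\emph{Step 2 (injectivity).} From $\tau(S)$ one recovers $S$ by reading each root-to-node path as a sequence of neighbour labels: start at $v$ and repeatedly take the $i$-th neighbour in the fixed order. Each node thus determines a unique vertex of $G$, and these vertices are exactly $S$. Hence $S\mapsto\tau(S)$ is injective. The number of subgraphs in question is therefore at most the number $t_k$ of $k$-node subtrees of $T_\Delta$ containing the root, that is, the number of $\Delta$-ary plane trees on $k$ nodes.

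\emph{Step 3 (counting $t_k$).} The generating function $T(x)=\sum_k t_kx^k$ satisfies $T=x(1+T)^\Delta$, since each of the $\Delta$ slots of the root is either empty or holds a subtree. Lagrange inversion then gives
\[
t_k=\frac1k[y^{k-1}](1+y)^{\Delta k}=\frac1k\binom{\Delta k}{k-1},
\]
which is exactly the claimed bound. As an alternative to Lagrange inversion, one can apply the cycle lemma to the $\{0,1\}$ occupancy sequences of length $\Delta k$.

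The only delicate point is Step 2: the encoding must be deterministic given $S$, which the fixed neighbour orderings together with BFS guarantee, so that distinct sets $S$ give distinct trees. The counting in Step 3 is classical.
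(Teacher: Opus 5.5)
The paper gives no proof of this lemma and simply imports it from the work of Borgs, Chayes, Kahn and Lov\'asz. Your argument is correct and is essentially the standard proof behind that cited bound: you inject connected $k$-sets containing $v$ into root-containing $k$-node subtrees of the infinite $\Delta$-ary tree, then count those subtrees by the Fuss--Catalan number $\frac1k\binom{\Delta k}{k-1}$. The one point that matters, and which you handle correctly, is that child labels refer to the fixed ordering of $N_G(u)$, not of $N_{G[S]}(u)$. This makes the label-walk from $v$ recover $S$ from $\tau(S)$, so injectivity holds for any choice of rooted spanning tree; BFS is just one convenient deterministic choice, so this step is less delicate than you suggest.
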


\begin{lemma}
\label{lem:connected-count}
Let $G$ be a graph of maximum degree at most $\Delta$ with $v \in V(G)$.
Let
$ c_{G,v}(k)=\left|\left\{ S\subseteq V(G): |S|=k,\ v\in S,\ G[S]\text{ is connected} \right\}\right|. $
Then
$c_{G,v}(k)
\le \bigl(4(\Delta+1)\bigr)^{k-1}$.
\end{lemma}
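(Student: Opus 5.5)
The plan is to derive the bound directly from Lemma~\ref{refBCKL}, which gives $c_{G,v}(k)\le \frac1k\binom{k\Delta}{k-1}$, and then estimate the binomial coefficient with Lemma~\ref{nm}. We may assume $\Delta\ge1$. If $\Delta=0$, the only connected induced subgraph containing $v$ is $\{v\}$, so $c_{G,v}(1)=1$ and $c_{G,v}(k)=0$ for $k\ge2$, and the claimed bound holds trivially.

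First I dispose of $k=1$. Here $c_{G,v}(1)=1=(4(\Delta+1))^0$, so the bound is immediate.

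For $k\ge2$, I apply Lemma~\ref{nm} with $a=k\Delta$ and $b=k-1$, both positive integers. This gives
\[
c_{G,v}(k)\le \frac1k\left(\frac{ek\Delta}{k-1}\right)^{k-1}
=\frac1k\,(e\Delta)^{k-1}\left(1+\frac{1}{k-1}\right)^{k-1}.
\]
The last factor is at most $e$, so
\[
c_{G,v}(k)\le \frac{e}{k}(e\Delta)^{k-1}.
\]

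It remains to compare this with $(4(\Delta+1))^{k-1}$. Since $e<4$ and $\Delta<\Delta+1$, we have $(e\Delta)^{k-1}\le (4(\Delta+1))^{k-1}$. The leftover factor is $e/k$. For $k\ge3$ it is at most $e/3<1$, and for $k=2$ it equals $e/2$.

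The case $k=2$ is the only place where the crude bound has slack issues, so I will check it directly instead. A connected induced subgraph on two vertices containing $v$ is $\{v,u\}$ with $u$ adjacent to $v$, so $c_{G,v}(2)\le\Delta\le 4(\Delta+1)$. Alternatively, the estimate above gives $e\cdot e\Delta/2<4\Delta$, which also suffices.

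For $k\ge3$ we get $\frac ek (e\Delta)^{k-1}\le (e\Delta)^{k-1}\le(4(\Delta+1))^{k-1}$. This completes the proof.

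No step is a genuine obstacle. The only care needed is the bookkeeping of the factor $e$ coming from $(1+1/(k-1))^{k-1}$, which the $1/k$ prefactor absorbs for $k\ge3$, and handling the small values of $k$ separately.
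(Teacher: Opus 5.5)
Your proof is correct and follows the paper's route: Lemma~\ref{refBCKL}, then Lemma~\ref{nm} with $a=k\Delta$ and $b=k-1$, then $e<4$. The only difference is that you bound $\left(1+\frac{1}{k-1}\right)^{k-1}$ by $e$ and treat $k=2$ separately, whereas the paper bounds it by $k$ (using $\binom{k-1}{j}\le (k-1)^j$), which cancels the $\frac{1}{k}$ prefactor uniformly for all $k\ge 2$.
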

\begin{proof}
Here $e$ denotes the base of the natural logarithm.
By Lemma \ref{refBCKL}, we have
$ c_{G,v}(k)~\le~\frac1k\binom{k\Delta}{k-1}. $
Next, it suffices to prove that
\begin{align}\label{connetseteq1}
\frac1k\binom{k\Delta}{k-1} \le \bigl(4(\Delta+1)\bigr)^{k-1}.
\end{align}
When $k=1$, both sides equal $1$.
When $\Delta=0$ and $k\ge2$, no connected $k$-vertex subset can contain
$v$, so $c_{G,v}(k)=0$. Thus (\ref{connetseteq1}) holds trivially.

Now suppose that $k\ge2$ and $\Delta\ge1$.
Using Lemma \ref{nm} with $a=k\Delta$ and $b=k-1$, we have
\begin{align}\label{connetseteq2}
\frac1k\binom{k\Delta}{k-1}
\le \frac1k \left(\frac{ek\Delta}{k-1}\right)^{k-1}.
\end{align}

Note that $\binom{k-1}{j}\le(k-1)^j$. Then
$
\left(1+\frac1{k-1}\right)^{k-1}
=\sum_{j=0}^{k-1} \binom{k-1}{j}\frac1{(k-1)^j} \le \sum_{j=0}^{k-1}1
=k$, implying
\begin{align}\label{connetseteq4}
\frac1k\left(\frac{k}{k-1}\right)^{k-1}\le1.
\end{align}
Combining \eqref{connetseteq2} and \eqref{connetseteq4}, we have
$ \frac1k\binom{k\Delta}{k-1} \le \frac1k \left(\frac{k}{k-1}\right)^{k-1}(e\Delta)^{k-1}
\le(e\Delta)^{k-1}$.
Since $e<4$ and $\Delta\le\Delta+1$, we have
$(e\Delta)^{k-1} \le \bigl(4(\Delta+1)\bigr)^{k-1}.$
This completes the proof.
\end{proof}

Recall that
$g_{\mathcal G}(p)=|V(\mathcal G)|^{-1}\log Q_{\mathcal G}(p)$.
By Lemma~\ref{lem:cluster-expansion}, $g_{\mathcal G}$ admits an absolutely convergent power series expansion on $[0,\zeta]$, and all its terms are monomials in $p$.
Rearranging these terms as a power series in $p$, we obtain:
\begin{equation}\label{expansion_Q}
g_{\mathcal G}(p)
=\sum_{j\ge1}b_j(\mathcal G)p^j,
\qquad 0\le p\le\zeta.
\end{equation}
Here $b_j(\mathcal G)$ denotes the coefficient of $p^j$,
and the constant term vanishes because $g_{\mathcal G}(0)=0$.

\begin{proposition}
\label{prop:coefficient-bound}
Let $\mathcal G$ be a $d$-regular, $r$-uniform hypergraph.
Recall that $\zeta=(32A_0)^{-d/r}$ with $A_0=r(d-1)+1$ and $M=\frac{3d}{20rA_0}$.
Then
\begin{equation}\label{eq:weighted-coefficient-bound}
\sum_{j\ge1}|b_j(\mathcal{G})|\zeta^j \le M
\end{equation}
with
$|b_j(\mathcal{G})|\le M\zeta^{-j}$ for each $j\geq 1$.
\end{proposition}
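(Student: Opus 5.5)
The plan is to bound the weighted sum of absolute coefficients by the absolute cluster expansion evaluated at $p=\zeta$, and then control that by Lemma~\ref{lem:abstract-cluster}.

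\textbf{Reduction to the absolute cluster sum.} Each term $\phi(\widetilde{\boldsymbol\gamma})\prod_i w_p(\gamma_i)$ in Lemma~\ref{lem:cluster-expansion} is an integer multiple of a single monomial $\pm p^{\sum_i|U(\gamma_i)|}$. Hence $b_j(\mathcal G)$ is $|V(\mathcal G)|^{-1}$ times the sum of the coefficients of those terms with $\sum_i|U(\gamma_i)|=j$. By the triangle inequality,
\[
\sum_{j\ge1}|b_j(\mathcal G)|\zeta^j\le\frac{1}{n}\sum_{m\ge1}\frac1{m!}\sum_{\widetilde{\boldsymbol\gamma}\in\mathcal P^m}|\phi(\widetilde{\boldsymbol\gamma})|\prod_{i=1}^m|w_\zeta(\gamma_i)|.
\]
The rearrangement is legitimate once this right-hand side is shown to be finite, and that finiteness is exactly what the remaining steps establish. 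The second claim, $|b_j|\le M\zeta^{-j}$, then follows trivially from the first, since each summand on the left is at most $M$.

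\textbf{Verifying the Koteck\'y--Preiss condition at $p=\zeta$.} This is the main step. I need two ingredients.

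\emph{(i) Weight bound.} A polymer $\gamma$ with $|\gamma|=k$ hyperedges covers at least $kr/d$ vertices, because each vertex lies in at most $d$ hyperedges and so $rk\le d|U(\gamma)|$. Therefore
\[
|w_\zeta(\gamma)|\le\zeta^{kr/d}=(32A_0)^{-k}.
\]

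\emph{(ii) Counting.} A polymer is a connected induced vertex set of $L(\mathcal G)$, and by \eqref{eq:line-degree} the maximum degree of $L(\mathcal G)$ is at most $A_0-1$. Lemma~\ref{lem:connected-count} then shows that at most $(4A_0)^{k-1}$ polymers of size $k$ contain a given hyperedge.

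Now fix $\gamma$. If $\gamma'\not\sim\gamma$, then $\gamma'$ contains a hyperedge lying in $\gamma$ or adjacent in $L(\mathcal G)$ to some hyperedge of $\gamma$. There are at most $|\gamma|A_0$ such hyperedges. Combining this with (i) and (ii),
\[
\sum_{\gamma'\not\sim\gamma}|w_\zeta(\gamma')|e^{|\gamma'|}\le|\gamma|A_0\sum_{k\ge1}(4A_0)^{k-1}\Bigl(\frac{e}{32A_0}\Bigr)^k=|\gamma|\,\frac{e/32}{1-e/8}\le|\gamma|.
\]
So \eqref{eq:KP-condition} holds at $p=\zeta$.

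\textbf{Conclusion.} Lemma~\ref{lem:abstract-cluster} now bounds the absolute cluster sum by $\sum_{\gamma}|w_\zeta(\gamma)|e^{|\gamma|}$. To estimate this, root each polymer at one of its hyperedges. There are $nd/r$ hyperedges, which gives
\[
\sum_{\gamma}|w_\zeta(\gamma)|e^{|\gamma|}\le\frac{nd}{r}\sum_{k\ge1}(4A_0)^{k-1}\Bigl(\frac{e}{32A_0}\Bigr)^k=\frac{nd}{rA_0}\cdot\frac{e/32}{1-e/8}.
\]
Numerically, $\frac{e/32}{1-e/8}\approx0.129\le 3/20$. Dividing by $n$ yields the bound $M=\frac{3d}{20rA_0}$.

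The only delicate points are the weight bound $|U(\gamma)|\ge kr/d$, which is what forces the exponent $d/r$ in $\zeta$, and the numerical check that the constant $32$ is large enough.
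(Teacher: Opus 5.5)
Your proposal is correct and follows the paper's proof essentially step for step. You use the weight bound $|U(\gamma)|\ge kr/d$, the count of at most $(4A_0)^{k-1}$ polymers of size $k$ containing a fixed hyperedge via Lemma~\ref{lem:connected-count}, and the neighborhood bound $|N_{L(\mathcal G)}[\gamma]|\le A_0|\gamma|$ to verify the Koteck\'y--Preiss condition; then Lemma~\ref{lem:abstract-cluster}, rooting at the $nd/r$ hyperedges, and the triangle inequality to pass to $\sum_j|b_j|\zeta^j$. The only cosmetic difference is that the paper first packages the rooted sum as a constant $B_0$ valid for all $|p|\le\zeta$, whereas you compute everything directly at $p=\zeta$.
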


\begin{proof}
In this proof, we denote by $e$ the base of the natural logarithm and suppose that $|p|\le\zeta$.
Denote by $\Gamma_f$ the set of the polymers containing $f\in E(\mathcal G)$.
Recall that $A_0=r(d-1)+1$ and $\zeta=(32A_0)^{-d/r}$, then $0<\zeta<1$.
Suppose that a polymer $\gamma$ contains $k$ hyperedges.
Let $d_\gamma(v)=|\{f\in\gamma:v\in f\}|$ for $v\in U(\gamma)$.
The number of the pairs $(v,f)$ with $f\in\gamma$ and $v\in f$ is $kr$.
Note that
$kr=\sum_{f\in\gamma}|f|
=\sum_{v\in U(\gamma)}d_\gamma(v).$
Since $\mathcal{G}$ is $d$-regular,  we have $d_\gamma(v)\le d$ for every $v$.
Therefore
$ kr\le d|U(\gamma)|,$
which is equivalent to
$ |U(\gamma)|\ge\frac{kr}{d}. $
Let $x=|p|^{r/d}$.
Since $|p|<1$, then
\begin{equation}\label{eq:polymer-weight-size}
|w_p(\gamma)|
=|p|^{|U(\gamma)|}
\le |p|^{kr/d}
=x^k.
\end{equation}
Fix a hyperedge $f\in E(\mathcal{G})$.
A polymer of size $k$ containing $f$ is exactly a connected $k$-vertex subset of $L(\mathcal{G})$ containing the vertex $f$.
By Lemma~\ref{lem:connected-count}, \eqref{eq:line-degree}, and \eqref{eq:polymer-weight-size}, we have
\begin{align}
\sum_{\gamma\in \Gamma_f}|w_p(\gamma)|e^{|\gamma|}
&=\sum_{k\ge 1} 	\sum_{\substack{\gamma\in \Gamma_f \\ |\gamma|=k}} |w_p(\gamma)|e^k
    \le \sum_{k\ge1} x^k e^k	\left( \sum_{\substack{\gamma\in \Gamma_f \\ |\gamma|=k}} 1 \right) \notag\\
&\le\sum_{k\ge1}(4A_0)^{k-1}e^kx^k
=ex\sum_{k\ge1}(4A_0ex)^{k-1} \notag\\
&=\frac{ex}{1-4A_0ex}.
\label{eq:rooted-polymer-sum}
\end{align}
Since $0\leq x= |p|^{\frac rd} \leq\zeta^{\frac rd}=(32A_0)^{-1}<\frac{1}{4A_0e}$ from $|p|\le \zeta$,
we have $4eA_0x\le e/8<1$ and the last equality of \eqref{eq:rooted-polymer-sum} holds.
For brevity, write $B_0:=\frac{e}{32A_0(1-e/8)}.$
Therefore, \eqref{eq:rooted-polymer-sum} becomes
\begin{align}\label{eq:rooted-uniform-bound}
\sum_{\gamma\in\Gamma_f}|w_p(\gamma)|e^{|\gamma|} \le& \frac{ex}{1-4A_0ex} \le \frac{e}{32A_0(1-e/8)} = B_0.
\end{align}

Now fix an arbitrary polymer $\gamma\in\mathcal P(\mathcal G)$.
Then \eqref{eq:rooted-uniform-bound} holds for every $f\in E(\cG)$, so we may apply it to each $f\in N_{L(\mathcal G)}[\gamma]$.
Let
\begin{align*}
N_{L(\mathcal G)}[\gamma]
=&\gamma\cup \left\{ j\in E(\mathcal G)\setminus\gamma: j\cap f\ne\varnothing \text{ for some }f\in\gamma \right\},\\
N_{L(\mathcal G)}[f]
=&\{f\} \cup \left\{ j\in E(\mathcal G)\setminus \{f\}: j\cap f\ne\varnothing \right\}.
\end{align*}
By \eqref{eq:line-degree}, we have $|N_{L{(\mathcal{G})}}[f]|\le \Delta(L(\mathcal{G}))+1\le A_0$ for $f\in V(L(\mathcal{G}))$.
Then
\begin{equation}\label{eq:closed-neighborhood-size}
|N_{L{(\mathcal{G})}}[\gamma]|
\le\sum_{f\in\gamma}|N_{L{(\mathcal{G})}}[f]|
\le A_0|\gamma|.
\end{equation}
If $\gamma'\not\sim\gamma$, then vertex subset $\gamma'$ of $L(\cG)$ contains some vertices $f\in N_{L(\cG)}[\gamma]$. \eqref{eq:rooted-uniform-bound} and \eqref{eq:closed-neighborhood-size} give
\begin{align*}
\sum_{\gamma'\not\sim\gamma} |w_p(\gamma')|e^{|\gamma'|}
&\le\sum_{f\in N_{L(\cG)}[\gamma]}~ \sum_{\gamma'\in \Gamma_f}|w_p(\gamma')|e^{|\gamma'|}\\
&\le |N_{L(\mathcal{G})}[\gamma]|B_0 \le A_0|\gamma|B_0\\
&=|\gamma|\frac{e}{32(1-e/8)}.
\end{align*}

Define a function $g(x)=\frac{x}{1-x/8}$ with $0<x<8$. Then $g(x)$ is monotonically increasing.
Therefore, by $e<3$, we have
\begin{equation}\label{eq:numerical-cluster-constant}
\frac{e}{32(1-e/8)} <\frac{3}{32(1-3/8)} =\frac3{20}<1.
\end{equation}
Thus, we obtain $
\sum_{\gamma'\not\sim\gamma} |w_p(\gamma')|e^{|\gamma'|}
\le |\gamma|$, meaning that the condition \eqref{eq:KP-condition} holds if $|p|\le\zeta$.

Since every polymer is nonempty, $|\gamma|\ge 1$.
At $|p|=\zeta$, by \eqref{eq:rooted-uniform-bound} and Lemma~\ref{lem:abstract-cluster},
we have
\begin{align}
\sum_{m\ge1}\frac1{m!} \sum_{\widetilde{\boldsymbol\gamma}\in \mathcal P^m} |\phi(\widetilde{\boldsymbol\gamma})|  \prod_{i=1}^m\zeta^{|U(\gamma_i)|}
&\le \sum_{\gamma}\zeta^{|U(\gamma)|}e^{|\gamma|}
  \le \sum_{\gamma}|\gamma| \zeta^{|U(\gamma)|}e^{|\gamma|} \notag\\
&= \sum_{f\in E(\mathcal{G})}  \sum_{\gamma\in\Gamma_f}\zeta^{|U(\gamma)|}e^{|\gamma|}
  \le \sum_{f\in E(\mathcal{G})}  B_0 \notag\\
&\le |E(\mathcal{G})|B_0. \label{eq:all-polymer-root-bound}
\end{align}

Finally, we consider the number of all incident vertex-edge pairs of $\mathcal G$, which is $r|E(\mathcal{G})|$ when counted by edges and $d|V(\mathcal{G})|$ when counted by vertices.
Thus $r|E(\mathcal{G})|=d|V(\mathcal{G})|,$ i.e.,
$ |E(\mathcal{G})|\big/|V(\mathcal{G})|=d/r. $
Dividing \eqref{eq:all-polymer-root-bound} by $|V(\mathcal{G})|$ and using \eqref{eq:numerical-cluster-constant},
we have
\begin{align}\label{eq:absolute-cluster-per-vertex}
\frac{1}{|V(\mathcal{G})|}\sum_{m\ge1}\frac1{m!} \sum_{\widetilde{\boldsymbol\gamma}\in \mathcal P^m} |\phi(\widetilde{\boldsymbol\gamma})| \prod_{i=1}^m\zeta^{|U(\gamma_i)|}
&\le \frac{|E(\mathcal{G})|}{|V(\mathcal{G})|}B_0
  =\frac dr\frac{e}{32A_0(1-e/8)}\notag\\
&<\frac dr\frac{3}{20A_0} = M.
\end{align}

For a polymer tuple $\widetilde{\boldsymbol\gamma}=(\gamma_1,\ldots,\gamma_m)\in \mathcal P^m$,
let $q(\widetilde{\boldsymbol\gamma})=\sum\limits_{i=1}^m|U(\gamma_i)|$.
By the definition of $w_p$, we have
$\prod\limits_{i=1}^m w_p(\gamma_i)
= \prod\limits_{i=1}^m (-1)^{|\gamma_i|}p^{|U(\gamma_i)|}
= (-1)^{\sum\limits_{i=1}^m|\gamma_i|} p^{\sum\limits_{i=1}^m|U(\gamma_i)|}$.
By Lemma \ref{lem:cluster-expansion} and the definition of $g_{\mathcal G}(p)$, we have
$
g_{\mathcal G}(p) = \frac1{|V(\mathcal G)|}
  \sum\limits_{m\ge1}\frac1{m!} \sum\limits_{\widetilde{\boldsymbol\gamma}\in\mathcal P(\mathcal G)^m}
  \phi(\widetilde{\boldsymbol\gamma}) (-1)^{\sum\limits_{i=1}^m|\gamma_i|} p^{q(\widetilde{\boldsymbol\gamma})}$.

Only the tuples satisfying \(q(\widetilde{\boldsymbol\gamma})=j\) contribute to the coefficient of $p^j$.
So we obtain
\begin{align*}
b_j(\mathcal G) = \frac1{|V(\mathcal G)|}
  \sum_{m\ge1}\frac1{m!} \sum_{\substack{
  \widetilde{\boldsymbol\gamma}\in\mathcal P(\mathcal G)^m\\
  q(\widetilde{\boldsymbol\gamma})=j }}
   (-1)^{\sum_{i=1}^m|\gamma_i|} \phi(\widetilde{\boldsymbol\gamma}).
\end{align*}
For fixed $j$, each polymer contains at least one edge, so
$j=q(\widetilde{\boldsymbol\gamma})=\sum\limits_{i=1}^m|U(\gamma_i)|\ge rm$,
and hence $m\le\lfloor j/r\rfloor$.
Therefore the outer sum contains only finitely many terms.
So we have
\begin{align}\label{eq:bj}
|b_j(\mathcal G)|
&= \Bigg|\frac1{|V(\mathcal G)|}
  \sum_{m\ge1}\frac1{m!} \sum_{\substack{\widetilde{\boldsymbol\gamma}\in\mathcal P(\mathcal G)^m\\q(\widetilde{\boldsymbol\gamma})=j }}
   (-1)^{\sum_{i=1}^m|\gamma_i|} \phi(\widetilde{\boldsymbol\gamma})\Bigg| \notag\\
&\le \frac1{|V(\mathcal G)|}
  \sum_{m\ge1}\frac1{m!}
  \sum_{\substack{\widetilde{\boldsymbol\gamma}\in\mathcal P(\mathcal G)^m\\ q(\widetilde{\boldsymbol\gamma})=j }}
|\phi(\widetilde{\boldsymbol\gamma})|.
\end{align}

By $q(\widetilde{\boldsymbol\gamma})=j$, we have
$\zeta^j =\zeta^{q(\widetilde{\boldsymbol\gamma})} =\zeta^{\sum_{i=1}^m|U(\gamma_i)|} =\prod_{i=1}^m\zeta^{|U(\gamma_i)|}$.
Therefore, by \eqref{eq:bj}, we have
\begin{align}
|b_j(\mathcal G)|\zeta^j
\le \frac1{|V(\mathcal G)|}
  \sum_{m\ge1}\frac1{m!}
  \sum_{\substack{\widetilde{\boldsymbol\gamma}\in\mathcal P(\mathcal G)^m\\q(\widetilde{\boldsymbol\gamma})=j}}
  |\phi(\widetilde{\boldsymbol\gamma})| \prod_{i=1}^m\zeta^{|U(\gamma_i)|}.
\end{align}

Summing this inequality over $j$ and using \eqref{eq:absolute-cluster-per-vertex}, we have
\begin{align*}
\sum_{j\ge1}|b_j(\mathcal{G})|\zeta^j
&\le \sum_{j\ge 1}\frac1{|V(\mathcal G)|}
\sum_{m\ge1}\frac1{m!}
\sum_{\substack{
\widetilde{\boldsymbol\gamma}\in\mathcal P(\mathcal G)^m\\
q(\widetilde{\boldsymbol\gamma})=j
}}
|\phi(\widetilde{\boldsymbol\gamma})|
\prod_{i=1}^m\zeta^{|U(\gamma_i)|} \notag\\
&=
\frac1{|V(\mathcal G)|}
\sum_{m\ge1}\frac1{m!}
\sum_{ \widetilde{\boldsymbol\gamma}\in\mathcal P(\mathcal G)^m}
|\phi(\widetilde{\boldsymbol\gamma})| \prod_{i=1}^m\zeta^{|U(\gamma_i)|} \le M,
\end{align*}
which is \eqref{eq:weighted-coefficient-bound}.
In particular, the Taylor series converges absolutely for every $0<p\le\zeta$.
Since every summand on the left is nonnegative, we have
$
|b_j(\mathcal{G})|\zeta^j \le\sum_{\ell\ge1}|b_\ell(\mathcal{G})|\zeta^\ell \le M$.
Dividing by $\zeta^j>0$ gives $|b_j(\mathcal{G})|\le M\zeta^{-j}.$
\end{proof}

Let $\mathcal G$ be a finite linear, $d$-regular, $r$-uniform hypergraph.
When $r=3$, let $\tau_3$ be the number of copies of the Berge triangle in $\mathcal G$,
and $\tau_4$ be the number of copies of the Pasch configuration in $\mathcal G$.
Define
$$
\delta_{\mathcal G} :=
\begin{cases}
\dfrac{\tau_3-\tau_4}{n}, & r=3,\\[6pt]
0, & r\ge4.
\end{cases}
$$

\begin{lemma}
\label{lem:linear-expansion}
Let $\mathcal G$ be a finite linear, $d$-regular, $r$-uniform hypergraph with $r\ge3$. Then
\begin{equation}\label{eq:G1-refined-expansion}
g_{\mathcal G}(p)
=-\frac drp^r+\binom d2p^{2r-1}
-\left(\frac{dA_0}{2r}+\delta_{\mathcal G}\right)p^{2r}
+R_{\mathcal G}(p),
\end{equation} for $0<p\le\zeta$,
where
\begin{equation}\label{eq:G1-refined-remainder}
R_{\mathcal G}(p)=\sum_{k\ge2r+1}b_k(\mathcal G)p^k,
\qquad
\sum_{k\ge2r+1}|b_k(\mathcal G)|\zeta^k\le M.
\end{equation}
\end{lemma}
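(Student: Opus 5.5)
The plan is to read off the coefficients $b_j(\mathcal G)$ for $j\le 2r$ directly from the cluster expansion of Lemma~\ref{lem:cluster-expansion}, using the formula for $b_j(\mathcal G)$ derived in the proof of Proposition~\ref{prop:coefficient-bound}:
\[
b_j(\mathcal G)=\frac1{n}\sum_{m\ge1}\frac1{m!}\sum_{q(\widetilde{\boldsymbol\gamma})=j}(-1)^{\sum_i|\gamma_i|}\phi(\widetilde{\boldsymbol\gamma}).
\]
The remainder statement \eqref{eq:G1-refined-remainder} is then immediate from \eqref{eq:weighted-coefficient-bound}, since it only drops nonnegative terms from $\sum_{j\ge1}|b_j(\mathcal G)|\zeta^j\le M$. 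Every polymer has $|U(\gamma)|\ge r$, so $q(\widetilde{\boldsymbol\gamma})\ge rm$. Hence for $j\le 2r$ only $m=1$ and $m=2$ occur, and for $m=2$ only the case where both polymers are single hyperedges with $q=2r$.

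\textbf{Clusters with $m=1$.} The contribution is $\frac1n\sum_{\gamma}w_p(\gamma)$, restricted to polymers with $|U(\gamma)|\le 2r$. I would classify such polymers by $k=|\gamma|$.
\begin{itemize}
\item For $k=1$ there are $|E|=dn/r$ polymers, each of weight $-p^r$, giving $-\frac dr p^r$.
\item For $k=2$, linearity forces the two edges to share exactly one vertex, so $|U|=2r-1$ and the weight is $+p^{2r-1}$. The number of intersecting pairs is $n\binom d2$, one pair for each vertex and each pair of edges through it, again by linearity. This gives $\binom d2p^{2r-1}$.
\item For $k\ge3$ I need to show $|U(\gamma)|\ge 2r+1$ unless $r=3$ and $\gamma$ is a Berge triangle or a Pasch configuration.
\end{itemize}

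\textbf{The case $k\ge3$ (main obstacle).} I would use the standard fact that a linear $r$-graph on $v$ vertices has at most $\binom v2/\binom r2$ edges. For $v\le 2r$ this gives at most $4+\frac{2}{r-1}$ edges.
\begin{itemize}
\item If $r\ge4$, then $k\le4$. Bonferroni with pairwise intersections of size at most $1$ gives $|U|\ge kr-\binom k2$, which is $\ge 2r+1$ for $k=3,4$. So nothing new appears, matching $\delta_{\mathcal G}=0$.
\item If $r=3$ and $v\le 6$, we get $k\le5$. The case $k=5$ is excluded: each vertex has degree at most $\lfloor 5/2\rfloor=2$ in a linear $3$-graph on $6$ vertices, so there are at most $12<15$ incidences.
\item For $r=3$, $k=3$: $|U|=6$ forces all three pairwise intersections to be distinct single vertices. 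This is exactly a Berge triangle, with weight $-p^6$.
\item For $r=3$, $k=4$: $|U|\le6$ forces the Pasch configuration, four triples on six vertices pairwise meeting. Its weight is $+p^6$.
\end{itemize}
Together these give the $-\delta_{\mathcal G}p^{2r}$ term. One must be careful that $\tau_3,\tau_4$ count copies as edge sets, which is exactly how polymers are counted.

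\textbf{Clusters with $m=2$.} Here $\gamma_1=\{e\}$ and $\gamma_2=\{f\}$ with $e\cap f\ne\varnothing$, where $e=f$ is allowed. Then $G_{\widetilde{\boldsymbol\gamma}}$ is a single edge, so $\phi=-1$, and the sign $(-1)^{2}=1$. The number of ordered pairs is
\[
|E|+2n\binom d2=\frac{dn}r+nd(d-1).
\]
Multiplying by $\frac1{2!}$ gives
\[
-\frac12\Bigl(\frac dr+d(d-1)\Bigr)=-\frac{d(1+r(d-1))}{2r}=-\frac{dA_0}{2r}
\]
as the coefficient of $p^{2r}$. All other tuples have $q\ge 2r+1$.

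\textbf{Conclusion.} Collecting the coefficients for $j\le 2r$ yields \eqref{eq:G1-refined-expansion}, with all remaining coefficients placed in $R_{\mathcal G}$. Convergence on $0<p\le\zeta$ holds by Proposition~\ref{prop:coefficient-bound}.
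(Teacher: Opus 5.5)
Your proposal is correct, but it reaches the low-order coefficients by a different route from the paper. The paper does not use the Ursell coefficients in this proof. It expands $Q_{\mathcal G}(p)$ over \emph{all} edge sets $F$, including disconnected ones, so that with $m=|E(\mathcal G)|$ the disjoint pairs contribute $\bigl(\binom m2-n\binom d2\bigr)p^{2r}$. It then applies $\log(1+x)=x-\frac12x^2+O(x^3)$, and the term $-\frac{m^2}{2}p^{2r}$ coming from the square cancels the order-$n^2$ part of $\binom m2$, leaving $-\frac m2-n\binom d2$. In that route the error terms are only $O_{\mathcal G}$, and matching the resulting Taylor coefficients with the $b_j(\mathcal G)$ defined via the cluster expansion rests implicitly on uniqueness of power-series coefficients. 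You instead read $b_j(\mathcal G)$ directly off the formula already derived in the proof of Proposition~\ref{prop:coefficient-bound}. As a result only connected polymers ($m=1$) and incompatible pairs of single-edge polymers ($m=2$, $\phi=-1$) enter, and no order-$n^2$ cancellation is needed. The coefficient $-\frac{dA_0}{2r}$ also gets a transparent meaning: $|E|+2n\binom d2=|E|\,A_0$ counts ordered pairs $(e,f)$ with $f\in N_{L(\mathcal G)}[e]$, because for linear $d$-regular hypergraphs the bound \eqref{eq:line-degree} holds with equality for every edge. The paper's route needs only inclusion--exclusion and a Taylor series; yours needs the Ursell-coefficient machinery but is cleaner and manifestly per-vertex. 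Your classification of polymers with $k\ge3$ agrees with the paper's in substance, with two small remarks. For $r\ge4$ the bound $\binom v2/\binom r2$ is unnecessary, since any three edges already cover at least $3r-3\ge2r+1$ vertices. In the Pasch case you should say explicitly why $k=4$ and $|U|=6$ force the configuration. By the same degree-at-most-$2$ count you use for $k=5$, every vertex has degree exactly $2$. Then the six vertices correspond injectively, by linearity, to the six pairs of edges, so each pair of edges meets in exactly one vertex and these intersection vertices are distinct.
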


\begin{proof}
Let $n=|V(\mathcal G)|$ and $m=|E(\mathcal G)|$.
Considering the number of all incident vertex-edge pairs of $\mathcal G$, we have
$mr=nd$, that is, $m=\frac{nd}{r}$.

Recall that
$Q_{\mathcal{G}}(p)  =\sum_{F\subseteq E(\mathcal{G})} (-1)^{|F|}p^{|U_F|}$.
We now sort the terms of $Q_{\mathcal{G}}(p)$ according to the value of $|F|$.
The set $F=\varnothing$ contributes $1$.
If $F=\{e\}$, then $|\bigcup_{f\in F}v(f)|=|v(e)|=r$. So each of the $m$ one-edge sets contributes $-p^r$.
Therefore, their total contribution is
$-mp^r=-\frac{nd}{r}p^r.$

Next, suppose that $|F|\ge 2$ and choose distinct hyperedges $f_1,f_2\in F$.
Since $\mathcal G$ is linear, $|v(f_1)\cap v(f_2)|\le 1$.
Thus
$
\left|\bigcup_{g\in F} v(g) \right|
\ge |v(f_1)\cup v(f_2)|
=|v(f_1)|+|v(f_2)|-|v(f_1)\cap v(f_2)|
\ge 2r-1$.
Moreover,  the number of unordered intersecting edge pairs is $n\binom d2$.
Indeed, each such pair has a unique common vertex, and each vertex lies in exactly $d$ edges.
Their unions have size $2r-1$; the remaining $\binom m2-n\binom d2$ edge pairs have unions of size $2r$.
The number of vertices occupied by two edges is at most $2r$.
So, when $|F|=2$, the contribution to the coefficient of $p^{2r-1}$ is $n\binom{d}{2}$ and the contribution to the coefficient of $p^{2r}$ is $\binom{m}{2}-n\binom{d}{2}$.

For any $F$ with $|F|\ge3$, choose three distinct edges
$e_1,e_2,e_3\in F$. By the linearity and $r$-uniformity with $r\ge 3$ we have
$|U_F|\ge |e_1\cup e_2\cup e_3| \ge 3r-|e_1\cap e_2|-|e_1\cap e_3|-|e_2\cap e_3| \ge 3r-3\ge 2r$.
Thus we only need to consider the terms of degree at least $2r$. We determine which such sets can satisfy $|U_F|\le2r$
and distinguish the following two cases.

\medskip
\noindent\textbf{Case 1. $r\ge4$.}

In this case, $3r-3\ge2r+1$.
Hence every edge set $F$ with $|F|\ge3$ contributes only at the terms of degree at least $2r+1$.
Thus the coefficients through degree $2r$ are determined entirely by the edge sets with $|F|\le2$.

\medskip
\noindent\textbf{Case 2. $r=3$.}

Note that $3r-3=2r$.
So the preceding bound gives $|U_F|\ge 2r=6$.
Consequently, among the edge sets with $|F|\ge3$, only those covering exactly six vertices can affect the coefficient of $p^6$.
We consider three subcases.

\smallskip
\noindent\textbf{Subcase 2.1. $|F|=3$.}

In this subcase, $|U_F|=6$ occurs precisely when the three hyperedges are pairwise intersecting with distinct intersection vertices (such an unordered triple is a Berge triangle).
So $\tau_{3}$ is the number of these triples.
Since each has sign $(-1)^3=-1$ in $Q_{\mathcal G}(p)$, their total contribution is
$ -\tau_{3} p^6. $
Every other three-edge set covers at least seven vertices and
contributes only at the terms of at degree at least seven.

\smallskip
\noindent\textbf{Subcase 2.2. $|F|=4$.}

In this subcase, $|U_F|=6$ occurs only in one situation: the four hyperedges are pairwise intersecting  with distinct intersection vertices,
and every vertex has degree 2 in the subhypergraph formed by $F$  (these are Pasch configurations).
So $\tau_{4}$ is the number of these sets.
Each has sign $(-1)^4=1$, so their total contribution is $ \tau_{4} p^6. $
Every other four-edge set covers at least seven vertices and contributes only at the terms of at degree at least seven.

\smallskip
\noindent\textbf{Subcase 2.3. $|F|\ge5$.}

Suppose that $|F|\ge5$ and $F$ covers exactly six vertices.
Each hyperedge of $F$ containing a fixed vertex $v$ contains two other vertices.
By the linearity, two different hyperedges containing $v$ cannot share any other vertex.
Since there are only five other vertices, at most two hyperedges of $F$ can contain $v$.
Therefore, there does not exist a set of 5 hyperedges on 6 vertices.
And this subcase does not contribute to the coefficient of the term with power $p^{2r}$.

\medskip
It follows that, when $r=3$, the total contribution at the terms at degree six
from all hyperedge sets with $|F|\ge3$ is $(\tau_{4}-\tau_{3})p^6$.
We next show that $\tau_{3}-\tau_{4}$ is nonnegative.
Each Pasch configuration contains exactly four Berge triangles.
Conversely, a Berge triangle can belong to at most one Pasch configuration.
Its union already consists of six vertices, three of which have degree two within the triangle.
In any Pasch configuration containing this Berge triangle, the fourth edge must therefore consist of the other three vertices.
This determines the fourth edge uniquely.
Then
$
\tau_{3}\ge4\tau_{4}.
$
Consequently, we have, for $|F|\ge 3$,
\[
\delta_{\mathcal G} =
\begin{cases}
    \frac{\tau_{3}-\tau_{4}}{n}\ge0 & r=3,\\
    0,  & r\ge 4.
\end{cases}
\]

The preceding counts determine all coefficients of $Q_{\mathcal G}(p)$ with degree at most $2r$.
Since $Q_{\mathcal G}(p)$ is a polynomial, its remaining terms have degree at least $2r+1$.
We therefore obtain

\begin{align}\label{eq:QG1}
Q_{\mathcal G}(p) =1-mp^r+n\binom d2p^{2r-1} +\left(\binom m2-n\binom d2-n\delta_{\mathcal G}\right)p^{2r}
+O_{\mathcal G}(p^{2r+1}).
\end{align}

Since the empty set contributes
$1$ to the partition function, by \eqref{eq:normalization}, we have
$Q_{\mathcal G}(p) =(1-p)^n Z_{\mathcal G}\left(\frac{p}{1-p}\right)
\ge (1-\zeta)^n>0,$ and $0\le p\le\zeta.$
Therefore, we have
\begin{align}\label{eq:logQG1}
\log Q_{\mathcal G}(p) &= \log \left( 1+ (Q_{\mathcal G}(p)-1)\right) \notag\\
&=Q_{\mathcal G}(p)-1 -\frac12\bigl(Q_{\mathcal G}(p)-1\bigr)^2 +O_{\mathcal G}\!\left( |Q_{\mathcal G}(p)-1|^3 \right).
\end{align}

By \eqref{eq:QG1} and $r\ge 3$, we have $
\bigl(Q_{\mathcal G}(p)-1\bigr)^2 =m^2p^{2r}+O_{\mathcal G}(p^{3r-1})$,
and the third-order Taylor remainder is
$O_{\mathcal G}(p^{3r})$.
Since $3r-1\ge2r+1$ for $r\ge3$, by \eqref{eq:QG1} and \eqref{eq:logQG1}, we obtain
\begin{align*}\label{eq:logQG1-II}
\log Q_{\mathcal G}(p) &=  Q_{\mathcal G}(p)-1 -\frac{m^2}{2}p^{2r} +O_{\mathcal G}(p^{2r+1}) \notag\\
&= -mp^r+ n\binom d2p^{2r-1}
	+\left(\binom m2-n\binom d2-\frac{m^2}{2}-n\delta_{\mathcal G}\right)p^{2r}
	+O_{\mathcal G}(p^{2r+1}).
\end{align*}

\noindent
Since $\frac mn=\frac dr$, the coefficient of $p^{2r}$ in $g_{\mathcal G}(p)$ is
$$
\frac1n \left(\binom m2-n\binom d2-\frac{m^2}{2} -n\delta_{\mathcal G} \right)
=-\frac{d}{2r}-\binom d2 -\delta_{\mathcal G}
=-\frac{dA_0}{2r}-\delta_{\mathcal G}.
$$

So
$g_{\mathcal G}(p)
=-\frac drp^r+\binom d2p^{2r-1} -\left(\frac{dA_0}{2r}+\delta_{\mathcal G}\right)p^{2r} +O_{\mathcal G}(p^{2r+1})$,
which gives \eqref{eq:G1-refined-expansion}.
By Proposition~\ref{prop:coefficient-bound}, we obtain \eqref{eq:G1-refined-remainder} directly.
\end{proof}

Recall that $p=\frac{\lambda}{1+\lambda}$.
In the following we use the operator:
\begin{equation}\label{eq:refined-Dp-operator}
\mathcal D:=\lambda\frac{\mathrm d}{\mathrm d\lambda}
=p(1-p)\frac{\mathrm d}{\mathrm dp}.
\end{equation}
Direct calculation gives
\begin{equation}\label{eq:refined-monomial-derivatives}
\mathcal Dp^k=k(1-p)p^k,
\qquad
\mathcal D^2p^k=k(1-p)\bigl(k-(k+1)p\bigr)p^k.
\end{equation}

\begin{lemma}\label{lem:weighted-derivative-tail}
Let $m\ge3$ be an integer and $\zeta$ be as defined in (\ref{eq:A0-rho}).
If $ R(p)=\sum_{k\ge m}a_kp^k$ and $\sum_{k\ge m}|a_k|\zeta^k\le\omega$ for some $\omega\ge0$, then, for $0\le p\le\zeta/2$,
\begin{align}
|\mathcal DR(p)|
&\le m\omega(1-p)\left(\frac p\zeta\right)^m,
\label{eq:DR}\\
|\mathcal D^2R(p)|
&\le m^2\omega(1-p)^2\left(\frac p\zeta\right)^m.
\label{eq:D2R}
\end{align}
\end{lemma}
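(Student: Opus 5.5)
The plan is to differentiate the series term by term and compare each term with the leading power $(p/\zeta)^m$, using that $p/\zeta\le 1/2$. The hypothesis $\sum_{k\ge m}|a_k|\zeta^k\le\omega<\infty$ means the power series $R$ has radius of convergence at least $\zeta$. Since $0\le p\le\zeta/2$ lies strictly inside that radius, $R$ may be differentiated term by term there. Because $\mathcal D=p(1-p)\frac{\mathrm d}{\mathrm dp}$, the formulas \eqref{eq:refined-monomial-derivatives} give
\[
\mathcal DR(p)=(1-p)\sum_{k\ge m}k\,a_kp^k,\qquad
\mathcal D^2R(p)=(1-p)\sum_{k\ge m}k\bigl(k-(k+1)p\bigr)a_kp^k .
\]
Both series converge absolutely for $p\le\zeta/2$.

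\textbf{First derivative.} Put $t=p/\zeta\in[0,1/2]$ and write $p^k=\zeta^k t^k$. Then
\[
|\mathcal DR(p)|\le(1-p)\,t^m\sum_{k\ge m}\bigl(k\,t^{k-m}\bigr)|a_k|\zeta^k .
\]
It suffices to show $k\,t^{k-m}\le m$ for all $k\ge m$. Writing $k=m+j$ with $j\ge0$, we have $t^{k-m}\le 2^{-j}$. Moreover $m+j\le m(1+j)\le m2^j$, so $k\,t^{k-m}\le m$. Summing and using the hypothesis gives \eqref{eq:DR}.

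\textbf{Second derivative.} First I would control the middle factor. Write $k-(k+1)p=k(1-p)-p$. Since $p\le\zeta/2<k/(k+1)$, this quantity is nonnegative and at most $k(1-p)$, so $|k-(k+1)p|\le k(1-p)$. This supplies the second factor $(1-p)$ in \eqref{eq:D2R} and gives
\[
|\mathcal D^2R(p)|\le(1-p)^2\,t^m\sum_{k\ge m}\bigl(k^2t^{k-m}\bigr)|a_k|\zeta^k .
\]
It then suffices to show $(m+j)^2\le m^2 2^j$ for all $j\ge0$, and this is where $m\ge3$ is used. The case $j=0$ is trivial. For $j=1$ the claim is $(m+1)^2\le 2m^2$, which holds for $m\ge3$. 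For the inductive step, the ratio $\bigl((m+j+1)/(m+j)\bigr)^2\le(4/3)^2=16/9<2$, while the right-hand side doubles. Summing against $|a_k|\zeta^k$ then yields \eqref{eq:D2R}.

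\textbf{Expected obstacle.} The only delicate points are the elementary inequalities $k\,t^{k-m}\le m$ and $k^2t^{k-m}\le m^2$, together with the sign of $k-(k+1)p$. All of these hinge on $p\le\zeta/2$ (so $t\le1/2$ and $p$ is tiny) and on $m\ge3$. Everything else is routine bookkeeping.
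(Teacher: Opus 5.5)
Your proposal is correct and follows essentially the same route as the paper: termwise differentiation, the substitution $t=p/\zeta\le 1/2$, and the elementary bounds $kt^k\le mt^m$ and $k^2t^k\le m^2t^m$ for $k\ge m\ge 3$. The paper proves these with the same consecutive-ratio argument $(1+1/k)^2t<1$ that drives your induction, and your explicit check that $0\le k-(k+1)p\le k(1-p)$ supplies the second factor of $(1-p)$, a step the paper passes over with ``Similarly''.
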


\begin{proof}
Since $\sum_{k\ge m}|a_k|\zeta^k\le\omega$, termwise differentiation can be performed twice.
We first prove the following claim.
\begin{claim}\label{kxk}
$kt^k\le mt^m$ and $k^2t^k\le m^2t^m$, for $k\ge m\ge 3$ and $0\le t\le 1/2$.
\end{claim}

\noindent\textit{Proof of Claim \ref{kxk}.}
If $t=0$, both inequalities hold trivially.
If $0<t\le1/2$, then
\begin{align*}
\frac{(k+1)t^{k+1}}{kt^k} =& \left(1+\frac1k\right)t \le \left(1+\frac13\right)\frac12 <1, \\
\frac{(k+1)^2t^{k+1}}{k^2t^k} =& \left(1+\frac1k\right)^2t \le \left(1+\frac13\right)^2\frac12 <1.
\end{align*}
This completes the proof of Claim \ref{kxk}.

Let $t=p/\zeta$. Then $t\le 1/2$, since $p\le\zeta/2$.
Therefore, by \eqref{eq:refined-monomial-derivatives} and Claim \ref{kxk}, we have
\begin{align*}
|\mathcal DR(p)| &\le (1-p)\sum_{k\ge m}|a_k|kp^k = (1-p)\sum_{k\ge m}|a_k|k \zeta^k \frac{p^k}{\zeta^k}\\
&\le (1-p)\sum_{k\ge m}|a_k|\zeta^k kt^k \le (1-p) mt^m \sum_{k\ge m}|a_k|\zeta^k \\
&\le (1-p)mt^m \omega.
\end{align*}
Similarly, we have
$
|\mathcal D^2R(p)| \le(1-p)^2\sum_{k\ge m}|a_k|\zeta^k k^2t^k \le m^2\omega(1-p)^2t^m$.

The assertions at $p=0$ follow directly.
This completes the proof.
\end{proof}

Before proving the theorem, we  define
\begin{align}
\theta_V(p)
:=&p(1-p)+d(1-p)\Bigl(-\bigl(r-(r+1)p\bigr)p^r +\frac{(2r-1)(d-1)}{2}\bigl(2r-1-2rp\bigr)p^{2r-1}\notag\\
&\quad-A_0\bigl(2r-(2r+1)p\bigr)p^{2r} \Bigr).\label{eq:theta-V}\\
\theta_E(p):=& p+d(1-p)\left(-p^r+\frac{(2r-1)(d-1)}{2}p^{2r-1}-A_0p^{2r}\right). \label{eq:theta-E}
\end{align}

\begin{proof}[Proof of Theorems~\ref{thm:variance-main} and \ref{thm:occupancy-main}]
The condition $0<\lambda\le\lambda_0(r,d)$ is equivalent to
$0<p\le\zeta/2$. By \eqref{eq:F-normalized} and
\eqref{eq:refined-Dp-operator}, we have
$
E_{\mathcal G}(\lambda)=p+\mathcal Dg_{\mathcal G}(p)$ and $
V_{\mathcal G}(\lambda)=p(1-p)+\mathcal D^2g_{\mathcal G}(p)
$.
Applying \eqref{eq:refined-monomial-derivatives} to the exact expression in Lemma~\ref{lem:linear-expansion}, from \eqref{eq:theta-E} and \eqref{eq:theta-V}, we have
\begin{align*}
E_{\mathcal G}(\lambda)
&= p-d(1-p)p^r+ \binom{d}{2}(2r-1)(1-p)p^{2r-1}
	-\left( \frac{dA_0}{2r}+\delta_\mathcal G  \right) 2r(1-p)p^{2r} +\mathcal D R_{\mathcal G}(p) \\
&=\theta_E(p)-2r\delta_{\mathcal G}(1-p)p^{2r} +\mathcal DR_{\mathcal G}(p), \\
V_{\mathcal G}(\lambda)
&= p(1-p) - d(1-p)\big( r-(r+1)p\big)p^r +\binom{d}{2}(2r-1)(1-p)(2r-1-2rp)p^{2r-1} \\
	&\qquad-\left(\frac{dA_0}{2r} + \delta_{\mathcal G} \right)2r(1-p)\big(2r-(2r+1)p \big) p^{2r} +\mathcal D^2R_{\mathcal G}(p) \\
&=\theta_V(p)-2r\delta_{\mathcal G}(1-p) \bigl(2r-(2r+1)p\bigr)p^{2r}
+\mathcal D^2R_{\mathcal G}(p).
\end{align*}
Both terms involving $\delta_{\mathcal G}$ are nonpositive on this interval.
Lemma~\ref{lem:weighted-derivative-tail} with $m=2r+1$ and $\omega=M$ gives
\begin{align}
E_{\mathcal G}(\lambda) &\le \theta_E(p) + \frac{M(1-p)(2r+1)}{\zeta^{2r+1}}p^{2r+1} = \theta_E(p) + O_{r,d}(p^{2r+1}). \label{EG}\\
V_{\mathcal G}(\lambda) &\le \theta_V(p) + \frac{M(2r+1)^2(1-p)^2}{\zeta^{2r+1}} p^{2r+1} = \theta_V(p) + O_{r,d}(p^{2r+1}). \label{VG}
\end{align}
Hence we prove \eqref{eq:occupancy-refined-bound} and \eqref{eq:variance-refined-bound}.
\end{proof}

\section{Corollary of Two Explicit Upper Bounds}\label{sec:comparison}

In this section, let $r\ge3$ and $d\ge2$.
Recall that $\mathcal H_d^r$ consists of $d$ marked vertices and $d$ disjoint
$(r-1)$-vertex blocks, with one hyperedge for each marked-vertex--block pair.
We compare the explicit versions of the upper bounds in
Theorems~\ref{thm:variance-main} and \ref{thm:occupancy-main} with the corresponding
fractions of $\mathcal H_d^r$.
Before presenting the corollary, we first give the definitions of the relevant parameters:
\begin{align}
\widehat p_{\mathrm{var}}(r,d)
&=\frac{5(d-1)A_0(r+1)^2}{432(r+2)^2}\zeta^{r+2}, \label{eq:comparison-pvar-hat}\\
p_{\mathrm{var}}(r,d)
&=\min\left\{\frac18,\frac\zeta2,
\widehat p_{\mathrm{var}}(r,d)\right\},
\qquad
\lambda_{\mathrm{var}}(r,d)
=\frac{p_{\mathrm{var}}(r,d)}{1-p_{\mathrm{var}}(r,d)}, \notag \\
\widehat p_{\mathrm{occ}}(r,d)
&=\frac{5(d-1)A_0(r+1)}{6(r+3)}\zeta^{r+2}, \label{eq:comparison-pocc-hat}\\
p_{\mathrm{occ}}(r,d)
&=\min\left\{\frac\zeta2,
\widehat p_{\mathrm{occ}}(r,d)\right\},
\qquad
\lambda_{\mathrm{occ}}(r,d)
=\frac{p_{\mathrm{occ}}(r,d)}{1-p_{\mathrm{occ}}(r,d)}. \notag 
\end{align}
Note that $A_0$ and $\zeta$ are as defined in \eqref{eq:A0-rho} and $p=\lambda/(1+\lambda)$.
For $r\ge3$ and $d\ge2$, $\mathcal H_d^r$ is not linear, so our following result does not identify an extremizer within the linear class.

\begin{corollary}\label{cor:variance-comparison}
Let $r\ge3$, $d\ge2$.
If $\mathcal{G}$ is a finite linear, $d$-regular, $r$-uniform hypergraph, then\begin{itemize}
\item [$(i)$] \ \  $V_{\mathcal{G}}(\lambda)\le V_{\cH_d^r}(\lambda)$ for every $0<\lambda\le\lambda_{\mathrm{var}}(r,d)$;

\item [$(ii)$] \ \ $E_{\mathcal{G}}(\lambda)\le E_{\cH_d^r}(\lambda)$ for every $0<\lambda\le\lambda_{\mathrm{occ}}(r,d)$.
\end{itemize}
\end{corollary}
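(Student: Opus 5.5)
The plan is to compare the explicit inequalities \eqref{EG} and \eqref{VG} for $\mathcal G$ with an exact low-order expansion of the occupancy and variance fractions of $\mathcal H_d^r$. The comparison will show that $\mathcal H_d^r$ is strictly larger already at order $p^{2r-1}$, and that this gap absorbs the explicit remainder.

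First I will compute $Q_{\mathcal H_d^r}(p)$ through degree $2r-1$. Write $H=\mathcal H_d^r$, so $n=rd$ and $m=d^2$. The single edges contribute $-d^2p^r$. Two edges meeting in one marked vertex share $1$ vertex, giving a union of size $2r-1$. Two edges sharing a block share $r-1$ vertices, giving a union of size $r+1$. Hence
\[
Q_H(p)=1-d^2p^r+d\binom d2\bigl(p^{2r-1}+p^{r+1}\bigr)+\cdots .
\]
Since $r\ge3$, the block pairs contribute at order $p^{r+1}$, which is below $2r-1$. I will collect all edge sets whose union has size at most $2r-1$. These are the sets of $k$ edges sharing one block together with $k$ distinct marked vertices, each contributing $(-1)^k p^{r-1+k}$. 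Taking logarithms and dividing by $n$ gives
\[
g_H(p)=-\tfrac dr p^r+\tfrac{d-1}{2r}\,d\,p^{r+1}+\cdots ,
\]
and the correction $\tfrac{d(d-1)}{2r}p^{r+1}>0$ at order $p^{r+1}$ is the decisive term. Because $r+1<2r-1$ for $r\ge3$, no finer structure is needed: it suffices to show $g_H(p)=-\tfrac dr p^r+\tfrac{d(d-1)}{2r}p^{r+1}+R_H(p)$ with $R_H$ of order $p^{r+2}$.

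The remainder will be controlled uniformly by Proposition~\ref{prop:coefficient-bound}, which applies to $H$ because it is $d$-regular and $r$-uniform; linearity is not needed there. Applying Lemma~\ref{lem:weighted-derivative-tail} with $m=r+2$ and $\omega=M$ gives
\[
|\mathcal D R_H|\le (r+2)M(1-p)\Bigl(\frac p\zeta\Bigr)^{r+2},\qquad
|\mathcal D^2R_H|\le (r+2)^2M(1-p)^2\Bigl(\frac p\zeta\Bigr)^{r+2}.
\]
Then I will write the lower bounds
\[
E_H\ge p-d(1-p)p^r+\tfrac{d(d-1)}{2}\tfrac{r+1}{r}(1-p)p^{r+1}-\text{tail},
\]
and, for the variance, the analogous bound with the factor $(r+1)\bigl(r+1-(r+2)p\bigr)$.

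The upper bounds for $\mathcal G$ come from \eqref{EG} and \eqref{VG}. Their terms of order $p^{2r-1}$ and beyond, together with the $O(p^{2r+1})$ tail, can be bounded by a constant times $p^{r+2}$, using $p\le\zeta/2$. For the variance I will additionally use $p\le 1/8$ so that the factor $r+1-(r+2)p$ stays bounded below by a constant multiple of $r+1$.

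The main obstacle is the final inequality. After cancelling the common terms, it must be shown that the positive gap $\tfrac{d(d-1)(r+1)}{2r}(1-p)p^{r+1}$, or its variance analogue, dominates all the error terms. The error terms are of size $C\,M\zeta^{-(r+2)}p^{r+2}$ plus the explicit $\theta$-terms of order $p^{2r-1}$. Dividing through by $p^{r+1}$ reduces the claim to $p\le c\,\zeta^{r+2}(d-1)A_0\cdot\frac{r+1}{r+3}$, using $M=\frac{3d}{20rA_0}$. This is exactly the form of $\widehat p_{\mathrm{occ}}$ and $\widehat p_{\mathrm{var}}$. The numerical constants $5/6$ and $5/432$ will be checked at the end by bounding $1-p\ge 1/2$ and $(r+1-(r+2)p)\ge \tfrac{5}{6}\cdot(r+1)$-type estimates. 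I expect this constant bookkeeping to be the most delicate step.
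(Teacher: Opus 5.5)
This is essentially the paper's proof. Both arguments expand $g_{\cH_d^r}$ to order $p^{r+1}$ (the paper uses the closed form of $Z_{\cH_d^r}$ rather than enumerating edge sets), control its tail by Proposition~\ref{prop:coefficient-bound} and Lemma~\ref{lem:weighted-derivative-tail} with $m=r+2$, and compare with \eqref{EG} and \eqref{VG}. The decisive gap is $\frac{d(d-1)(r+1)}{2r}(1-p)p^{r+1}$, at order $p^{r+1}$, not $p^{2r-1}$ as your opening sentence says.

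Two points about the constant bookkeeping. For the occupancy part, the stated threshold leaves only a factor $\frac{4(r+3)}{3(r+2)}$ of slack, so you need the paper's sharper estimates. Keep $1-p$ as a common factor there; bounding it by $\frac12$ would already fail. Then bring each of the three error terms down to $M(r+2)(1-p)(p/\zeta)^{r+2}$, using $kt^k\le(r+2)t^{r+2}$ and $\binom d2\zeta^{2r-1}\le M$. The latter follows from Proposition~\ref{prop:coefficient-bound} applied to $\mathcal G$, since $b_{2r-1}(\mathcal G)=\binom d2$. For the variance, the paper uses $(1-p)\bigl(r+1-(r+2)p\bigr)\ge\frac{r+1}{2}$ for $p\le\frac18$, and the threshold has ample slack.
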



We first prove the following lemma needed for this corollary.

\begin{lemma}
\label{lem:H-expansion}
For $r\ge3$ and $d\ge2$, we have
\begin{equation*}\label{eq:H-g-expansion}
g_{\cH_d^r}(p)
=-\frac drp^r+\frac{d(d-1)}{2r}p^{r+1}
+O_{r,d}(p^{r+2}).
\end{equation*}
\end{lemma}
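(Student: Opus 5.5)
The plan is to expand $Q_{\cH_d^r}(p)$ directly from its definition \eqref{eq:Q-definition}, keeping only terms of degree at most $r+1$, and then take the logarithm. Since $\cH_d^r$ is a single fixed hypergraph depending only on $r$ and $d$, no cluster-expansion uniformity (Proposition~\ref{prop:coefficient-bound}) is needed. $Q_{\cH_d^r}$ is a polynomial with $Q_{\cH_d^r}(0)=1$, so every implicit constant automatically depends only on $r$ and $d$.

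\textbf{Structure of $\cH_d^r$.} It has $n=rd$ vertices and $d^2$ hyperedges $e_{i,j}=\{x_i\}\cup B_j$, where $x_1,\dots,x_d$ are the marked vertices and $B_1,\dots,B_d$ are the $(r-1)$-blocks. Two distinct hyperedges meet in one of two ways:
\begin{itemize}
\item if they share the marked vertex ($e_{i,j},e_{i,j'}$ with $j\ne j'$), they meet in one vertex and their union has $2r-1$ vertices;
\item if they share the block ($e_{i,j},e_{i',j}$ with $i\ne i'$), they meet in $r-1$ vertices and their union has $r+1$ vertices.
\end{itemize}
Otherwise they are disjoint and the union has $2r$ vertices.

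\textbf{Low-order terms of $Q_{\cH_d^r}$.} I classify $F\subseteq E(\cH_d^r)$ by $|U_F|$.
\begin{itemize}
\item $F=\varnothing$ contributes $1$.
\item The $d^2$ single hyperedges contribute $-d^2p^r$.
\item For $|F|=2$, a union of size $r+1$ arises exactly from the same-block pairs. There are $d\binom d2$ of them, each with sign $+1$. All other pairs have unions of size $2r-1\ge r+2$ or $2r$, using $r\ge3$.
\item For $|F|\ge3$, I claim $|U_F|\ge r+2$. If all edges of $F$ share one block, then $|F|$ distinct marked vertices appear, so $|U_F|=r-1+|F|\ge r+2$. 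Otherwise $F$ contains two edges with different blocks, and these already cover at least $2r-1\ge r+2$ vertices.
\end{itemize}
Hence
\[
Q_{\cH_d^r}(p)=1-d^2p^r+d\binom d2 p^{r+1}+O_{r,d}(p^{r+2}).
\]

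\textbf{Logarithm.} By Lemma~\ref{lem:normalization}, $Q_{\cH_d^r}(p)>0$. Writing $\log Q=(Q-1)+O\bigl((Q-1)^2\bigr)$ and noting that $(Q-1)^2=O(p^{2r})$ with $2r\ge r+2$, I get
\[
\log Q_{\cH_d^r}(p)=-d^2p^r+d\binom d2p^{r+1}+O_{r,d}(p^{r+2}).
\]
Dividing by $n=rd$ gives $-\frac drp^r+\frac{d(d-1)}{2r}p^{r+1}+O_{r,d}(p^{r+2})$, which is the claim.

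\textbf{Main obstacle.} The only step needing care is the claim that every $F$ with $|F|\ge3$ has $|U_F|\ge r+2$. The case split above settles it, but I would double-check the boundary case $r=3$, where $2r-1=r+2$ holds with equality, so the second case still clears the threshold.
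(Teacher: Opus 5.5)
Your proof is correct, but it computes the low-order coefficients of $Q_{\mathcal H_d^r}$ by a different route than the paper.

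\textbf{The paper's route.} The paper does not enumerate edge subsets. It combines Lemma~\ref{lem:normalization} with the closed form
\[
Z_{\mathcal H_d^r}(\lambda)=(1+\lambda)^{(r-1)d}+\bigl((1+\lambda)^d-1\bigr)\bigl((1+\lambda)^{r-1}-\lambda^{r-1}\bigr)^d .
\]
This formula comes from splitting independent sets by whether they contain a marked vertex; if they do, no block may be fully occupied. After substituting $\lambda=p/(1-p)$, it collapses to the exact identity $Q_{\mathcal H_d^r}(p)=1+A(p)\bigl(B(p)-1\bigr)$, where $A(p)=1-(1-p)^d$ and $B(p)=(1-p^{r-1})^d$. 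Expanding $A=dp-\binom d2p^2+O(p^3)$ and $B-1=-dp^{r-1}+O(p^{2r-2})$ gives the same $1-d^2p^r+d\binom d2p^{r+1}+O_{r,d}(p^{r+2})$. The logarithm step is then identical to yours.

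\textbf{Comparison.} The paper's route yields an exact formula for $Q_{\mathcal H_d^r}$ with no case analysis, so higher coefficients are available if needed. Your route works directly from \eqref{eq:Q-definition} and parallels the edge-set classification in Lemma~\ref{lem:linear-expansion}. It also shows explicitly that the $p^{r+1}$ term comes exactly from pairs of hyperedges sharing a block, i.e.\ from the failure of linearity in $\mathcal H_d^r$.

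Your case split for $|F|\ge3$ is sound, including the boundary case $r=3$. If all edges lie on one block, then $|U_F|=r-1+|F|$. Otherwise, two edges on different blocks already cover at least $2r-1\ge r+2$ vertices.
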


\begin{proof}
By Lemma~\ref{lem:normalization} and $|V(\cH_d^r)|=rd$, we have
$Q_{\cH_d^r}(p) =(1-p)^{rd}Z_{\cH_d^r}\left(\frac{p}{1-p}\right)$.
By direct calculation, we have
$ Z_{\cH_d^r}(\lambda) = (1+\lambda)^{(r-1)d} +\bigl((1+\lambda)^d-1\bigr) \bigl((1+\lambda)^{r-1}-\lambda^{r-1}\bigr)^d.$
Using $\lambda=\frac{p}{1-p}$ and $1+\lambda=\frac1{1-p}$, we have
\begin{align*}
Q_{\cH_d^r}(p) =& (1-p)^{rd} \Big((1+\lambda)^{(r-1)d} +\bigl((1+\lambda)^d-1\bigr) \bigl((1+\lambda)^{r-1}-\lambda^{r-1}\bigr)^d\Big) \notag\\
=& (1-p)^{rd} \Bigg( (1-p)^{-(r-1)d} + ((1-p)^{-d}-1)((1-p)^{-(r-1)}-\big(\frac{p}{1-p}\big)^{r-1})^d \Bigg) \notag\\
=& (1-p)^{d} + (1-p)^{rd} \big((1-p)^{-d}-1 \big) (1-p^{r-1})^d (1-p)^{-(r-1)d}  \notag\\
=& (1-p)^{d} + (1-p)^{d}(1-p^{r-1})^d \big((1-p)^{-d}-1 \big) \notag\\
=& (1-p)^{d} + (1-p^{r-1})^d \Big( 1-(1-p)^d \Big) .
\end{align*}
For brevity, let $ A(p)=1-(1-p)^d$, $B(p)=(1-p^{r-1})^d$.
Then
$
Q_{\cH_d^r}(p)=1+A(p)(B(p)-1)$.

Note that
$A(p)=dp-\binom d2 p^2+O_d(p^3)$
 and
$B(p)-1 =-dp^{r-1}+O_{r,d}(p^{2r-2})$
where the definition of $O_d$ is similar to that of $O_{r,d}$.
Then
\begin{align*}
A(p)(B(p)-1)
&=\bigl(dp-\binom d2 p^2+O_d(p^3)\bigr) \bigl(-dp^{r-1}+O_{r,d}(p^{2r-2})\bigr)\\
&=-d^2p^r+d\binom d2 p^{r+1} +O_{r,d}(p^{r+2})+O_{r,d}(p^{2r-1}).
\end{align*}
Since $r\ge3$ implies $2r-1\ge r+2$, the last two error terms can be combined.
Thus
\begin{equation}\label{eq:QH-low-order}
Q_{\cH_d^r}(p)
=1-d^2p^r+d\binom d2 p^{r+1}+O_{r,d}(p^{r+2}).
\end{equation}

Let $T(p) := Q_{\cH_d^r}(p)-1 $. Then $T(p)=-d^2p^r+d\binom{d}{2} p^{r+1} + O_{r,d}(p^{r+2})$
from \eqref{eq:QH-low-order}.
Note that $T(p)=O_{r,d}(p^r)\to0$ as $p\to0^+$.
And $Q_{\mathcal H_d^r}(p)=1+T(p)>0$ for all sufficiently small $p\ge0$.
The Taylor expansion $\log(1+t)=t+O(t^2)$ as $t\to0$ gives
$ \log (1+T(p)) =T(p)+O(T(p)^2) =T(p)+O_{r,d}(p^{2r}). $
Consequently, we have
\begin{align*}
\log Q_{\cH_d^r}(p) &= T(p)+O(T^2) \\
&= -d^2p^r+d\binom{d}{2} p^{r+1} + O_{r,d}(p^{r+2}) + O(p^{2r}) \\
&= -d^2p^r+d\binom{d}{2} p^{r+1} + O_{r,d}(p^{r+2}),
\end{align*}
which implies that
\begin{align*}
g_{\mathcal{H}_d^r}(p)= \frac{1}{|V(\mathcal{H}_d^r)|} \log Q_{\mathcal{H}_d^r}(p)
= -\frac drp^r+\frac{d(d-1)}{2r}p^{r+1} +O_{r,d}(p^{r+2}).
\end{align*}
So the proof  is completed.
\end{proof}

\begin{proof}[Proof of Corollary~\ref{cor:variance-comparison}]
Let $0<p\le\zeta/2$ and $t=p/\zeta$.
Since $0<\zeta<1$, we have $0<p<1/2$ and $0<t\le1/2$.
By Lemma~\ref{lem:linear-expansion}, the coefficient of $p^{2r-1}$ in $g_{\mathcal G}(p)$ is $\binom d2$.
Recall that $M=\frac{3d}{20rA_0}$.
Proposition~\ref{prop:coefficient-bound} therefore implies
\begin{align}\label{d2rho}
\binom d2\zeta^{2r-1} = |b_{2r-1}(\mathcal G)|\zeta^{2r-1}
\le \sum_{k\ge1}|b_k(\mathcal G)|\zeta^k \le M.
\end{align}

By Claim \ref{kxk} in the proof of Lemma \ref{lem:weighted-derivative-tail}, we have the following inequalities:
\begin{align}\label{monotonicity}
kt^k\le(r+2)t^{r+2},
\qquad
k^2t^k\le(r+2)^2t^{r+2}
\qquad(k\ge r+2).
\end{align}

Since $r\ge3$, we have $2r-1\ge r+2$. So these inequalities apply to $k\in\{2r-1,2r+1\}$.
By Lemma~\ref{lem:H-expansion}, the function $g_{\mathcal H_d^r}(p)$ can be written as
$ g_{\mathcal H_d^r}(p) = -\frac drp^r +\frac{d(d-1)}{2r}p^{r+1} +R_{\mathcal H_d^r}(p)$,
where $R_{\mathcal H_d^r}(p)=\sum_{k\ge r+2}b_k(\mathcal H_d^r)p^k$. By
Proposition~\ref{prop:coefficient-bound}, we have
$
\sum_{k\ge r+2}|b_k(\mathcal H_d^r)|\zeta^k\le M$.
Applying Lemma~\ref{lem:weighted-derivative-tail} with $m=r+2$ and $\omega=M$ yields
\begin{align}
|\mathcal D R_{\mathcal H_d^r}(p)| &\le M(r+2)(1-p)t^{r+2}, \label{eq:DRH}\\
|\mathcal D^2 R_{\mathcal H_d^r}(p)| &\le M(r+2)^2(1-p)^2t^{r+2}. \label{eq:D2RH}
\end{align}

By \eqref{d2rho} and \eqref{monotonicity}, we obtain
\begin{align}\label{eq:cof_2r-1}
(2r-1)\binom d2(1-p)p^{2r-1} &= (1-p)\left(\binom d2\zeta^{2r-1}\right) (2r-1)t^{2r-1} \notag \\
&\le M(1-p)(2r-1)t^{2r-1} \notag \\
&\le M(r+2)(1-p)t^{r+2}.
\end{align}
Similarly, by \eqref{monotonicity}, we have
\begin{align}\label{k=2r+1}
(2r+1)M(1-p)t^{2r+1} \le M(r+2)(1-p)t^{r+2}.
\end{align}

Recall that $\theta_E(p) =p-d(1-p)p^r +(2r-1)\binom d2(1-p)p^{2r-1} -dA_0(1-p)p^{2r}$ where
the last term $-dA_0(1-p)p^{2r}$ is nonpositive.
Thus, by \eqref{eq:cof_2r-1} and \eqref{k=2r+1}, we have
\begin{align*}
\theta_E(p)+(2r+1)M(1-p)t^{2r+1} \le p-d(1-p)p^r +2M(r+2)(1-p)t^{r+2}.
\end{align*}

By \eqref{eq:F-normalized}, \eqref{eq:refined-monomial-derivatives} and \eqref{eq:DRH}, we have
\begin{align}
E_{\mathcal H_d^r}(\lambda)
=&p+\mathcal Dg_{\mathcal H_d^r}(p)  \notag\\
=&p-d(1-p)p^r +\frac{d(d-1)(r+1)}{2r}(1-p)p^{r+1} +\mathcal D R_{\mathcal H_d^r}(p) \notag\\
\ge&p-d(1-p)p^r +\frac{d(d-1)(r+1)}{2r}(1-p)p^{r+1} -M(r+2)(1-p)t^{r+2}. \label{lowEH}
\end{align}

Next we consider the difference between the value of $E_{\mathcal H_d^r}(\lambda)$ and the upper bound on $E_\mathcal G(\lambda)$.
By \eqref{EG} and \eqref{lowEH}, we get
\begin{align}
&E_{\mathcal H_d^r}(\lambda)-\theta_E(p)
-(2r+1)M(1-p)t^{2r+1}\notag\\
 \ge& \frac{d(d-1)(r+1)}{2r}(1-p)p^{r+1} -M(r+2)(1-p)t^{r+2}\notag\\
\qquad &  -(2r-1)\binom d2(1-p)p^{2r-1} +dA_0(1-p)p^{2r} -(2r+1)M(1-p)t^{2r+1} \notag\\
 \ge & \frac{d(d-1)(r+1)}{2r}(1-p)p^{r+1} -3M(r+2)(1-p)t^{r+2}\notag\\
=& (1-p)p^{r+1} \left( \frac{d(d-1)(r+1)}{2r} -\frac{3M(r+2)}{\zeta^{r+2}}p \right). \label{eq:occupancy-comparison-gap}
\end{align}
The second inequality is obtained from \eqref{eq:cof_2r-1}, \eqref{k=2r+1} and $dA_0(1-p)p^{2r}\ge0$.

If $p\le p_{\mathrm{occ}}(r,d)$, then $p\le\widehat p_{\mathrm{occ}}(r,d)$ by definition.
Using \eqref{eq:comparison-pocc-hat} and $M=3d/(20rA_0)$, we have
\begin{align*}
\frac{d(d-1)(r+1)}{2r} -\frac{3M(r+2)}{\zeta^{r+2}}p
\ge& \frac{d(d-1)(r+1)}{2r} -\frac{3M(r+2)}{\zeta^{r+2}} \widehat p_{\mathrm{occ}}(r,d)\\
=&
\frac{d(d-1)(r+1)}{2r} -\frac{3d(d-1)(r+1)(r+2)}{8r(r+3)}\\
=&
\frac{d(d-1)(r+1)(r+6)}{8r(r+3)} >0.
\end{align*}
Since $(1-p)p^{r+1}>0$, $E_{\mathcal H_d^r}(\lambda)\ge \theta_E(p)+(2r+1)M(1-p)t^{2r+1}$ whenever $0<p\le p_{\mathrm{occ}}(r,d)$.

For the variance fraction, we additionally assume that $p\le1/8$, as ensured by the definition of $p_{\mathrm{var}}(r,d)$.
Recall that
\begin{align*}
\theta_V(p)
=&p(1-p)-d(1-p)\bigl(r-(r+1)p\bigr)p^r +(2r-1)\binom d2(1-p)\bigl(2r-1-2rp\bigr)p^{2r-1}\\
&-dA_0(1-p)\bigl(2r-(2r+1)p\bigr)p^{2r}.
\end{align*}

Since $p<1/2$, $0<2r-1-2rp=(2r-1)(1-p)-p\le(2r-1)(1-p).$
Combining this inequality with \eqref{d2rho} gives
\begin{align*}
(2r-1)\binom d2(1-p) \bigl(2r-1-2rp\bigr)p^{2r-1} &\le (2r-1)^2\binom d2(1-p)^2p^{2r-1}\\
&= (1-p)^2\left(\binom d2\zeta^{2r-1}\right) (2r-1)^2t^{2r-1}\\
&\le M(1-p)^2 (2r-1)^2t^{2r-1}\\
&\le M(1-p)^2 (r+2)^2t^{r+2}.
\end{align*}
Similarly, we have
$ (2r+1)^2(1-p)^2t^{2r+1}M \le (r+2)^2(1-p)^2t^{r+2}M. $
The last term of $\theta_V(p)$ is nonpositive because $2r-(2r+1)p>0$.
Therefore, we have
\begin{align}\label{eq:theta-V+re}
&\theta_V(p)+(2r+1)^2M(1-p)^2t^{2r+1} \notag\\ \le& p(1-p)-d(1-p)\bigl(r-(r+1)p\bigr)p^r + 2M(r+2)^2(1-p)^2t^{r+2}.
\end{align}

Note that
\begin{align}\label{eq:1}
(1-p)\bigl(r+1-(r+2)p\bigr)
&=r+1-(2r+3)p+(r+2)p^2 \notag\\
&\ge r+1-\frac{2r+3}{8} \ge\frac{r+1}{2}.
\end{align}

Applying \eqref{eq:refined-monomial-derivatives} to $V_{\mathcal H_d^r}(\lambda)$ and using \eqref{monotonicity} and \eqref{eq:1}, we obtain
\begin{align}
V_{\mathcal H_d^r}(\lambda)
&=p(1-p)-d(1-p)\bigl(r-(r+1)p\bigr)p^r \notag\\
&\quad+\frac{d(d-1)(r+1)}{2r} (1-p)\bigl(r+1-(r+2)p\bigr)p^{r+1} + \mathcal D^2R_{\mathcal H_d^r}(p)\notag\\
&\ge p(1-p)-d(1-p)\bigl(r-(r+1)p\bigr)p^r +\frac{d(d-1)(r+1)^2}{4r}p^{r+1} \notag\\
&\quad -M(r+2)^2(1-p)^2t^{r+2}. \label{VH}
\end{align}

Combining with \eqref{eq:theta-V+re}, \eqref{eq:1} and \eqref{VH}, we have
\begin{align}
&V_{\mathcal H_d^r}(\lambda)-\theta_V(p) -(2r+1)^2M(1-p)^2t^{2r+1}\notag\\
\ge& \frac{d(d-1)(r+1)^2}{4r}p^{r+1} -3M(r+2)^2(1-p)^2t^{r+2}\notag\\
\ge& \frac{d(d-1)(r+1)^2}{4r}p^{r+1} -3M(r+2)^2t^{r+2}\notag\\
=& p^{r+1} \left( \frac{d(d-1)(r+1)^2}{4r} -\frac{3M(r+2)^2}{\zeta^{r+2}}p \right). \label{eq:variance-comparison-gap}
\end{align}

If $p\le p_{\mathrm{var}}(r,d)$, then $p\le\widehat p_{\mathrm{var}}(r,d)$.
Using \eqref{eq:comparison-pvar-hat} and $M=3d/(20rA_0)$, we obtain
\begin{align*}
&\frac{d(d-1)(r+1)^2}{4r} -\frac{3M(r+2)^2}{\zeta^{r+2}}p\\
\ge& \frac{d(d-1)(r+1)^2}{4r} -\frac{3M(r+2)^2}{\zeta^{r+2}} \widehat p_{\mathrm{var}}(r,d)\\
=& \frac{d(d-1)(r+1)^2}{4r}-\frac{d(d-1)(r+1)^2}{192r}\\
=& \frac{47d(d-1)(r+1)^2}{192r} >0.
\end{align*}
So we have $V_{\mathcal H_d^r}(\lambda)>\theta_V(p)+(2r+1)^2M(1-p)^2t^{2r+1}$ whenever $0<p\le p_{\mathrm{var}}(r,d)$.

By definitions, $p_{\mathrm{var}}(r,d)\le p_{\mathrm{occ}}(r,d)\le\zeta/2$.
Since $p/(1-p)$ is increasing on $[0,1)$, we have
$$
\begin{aligned}
\lambda_{\mathrm{var}}(r,d)
&=\frac{p_{\mathrm{var}}(r,d)}{1-p_{\mathrm{var}}(r,d)}
\le\frac{p_{\mathrm{occ}}(r,d)}{1-p_{\mathrm{occ}}(r,d)}
=\lambda_{\mathrm{occ}}(r,d),\\
\lambda_{\mathrm{occ}}(r,d)
&\le\frac{\zeta/2}{1-\zeta/2}
=\frac{\zeta}{2-\zeta}
=\lambda_0(r,d).
\end{aligned}
$$
Combining with the increasing relation $\lambda=p/(1-p)$, \eqref{EG} and \eqref{VG}, we prove the result.
\end{proof}

\section*{Acknowledgments}
This work is partially supported by National Natural Science Foundation of China (No.\ 12271251) and Postgraduate Research \& Practice Innovation Program of Jiangsu Province (No.\ 26CXJH0986).

\section*{Author Contributions} All authors contributed equally to this work.
\section*{Data Availability} No data was used in this article
\section*{Author Contributions} The authors declare that they have no Conflict of interest.

\section*{Declarations}
During the preparation of this work, the authors used AI systems to assist in exploring potential approaches to the proof of the main theorems. All AI-generated suggestions were verified and refined by the authors.


\begin{thebibliography}{99}

\bibitem{A1996} N. Alon, Independence numbers of locally sparse graphs and a Ramsey type problem, Random Struct. Algorithms 9 (1996) 271--278.

\bibitem{BBN2021} J. Balogh, B. Bollob\'as, B. Narayanan, Counting independent sets in regular hypergraphs, J. Combin. Theory Ser. A, 180 (2021) 105405.

\bibitem{BMS2015} J. Balogh, R. Morris, W. Samotij, Independent sets in hypergraphs, J. Amer. Math. Soc. 28 (2015) 669--709.

\bibitem{BencsBuys2025} F. Bencs, P. Buys, Optimal zero-free regions for the independence polynomial of bounded degree hypergraphs, Random Struct. Algorithms 66 (2025) e70018.


\bibitem{BLM2022} T. Bohman, X. Liu, D. Mubayi, Independent sets in hypergraphs omitting an intersection, Random Struct. Algorithms 61 (2022) 493--519.


\bibitem{BCKL2013} C. Borgs, J. Chayes, J. Kahn, L. Lov\'{a}sz, Left and right convergence of graphs with bounded degree, Random Struct. Algorithms 42 (2013) 1--28.

\bibitem{CPST2022} E. Cohen, W. Perkins, M. Sarantis, P. Tetali, On the number of independent sets in uniform, regular, linear hypergraphs, European J. Combin. 99 (2022) 103401.

\bibitem{CDM2014} J. Cooper, K. Dutta, D. Mubayi, Counting independent sets in hypergraphs, Combin. Probab. Comput. 23 (2014) 539--550.

\bibitem{CR2013} J. Cutler, A. J. Radcliffe, Hypergraph independent sets, Combin. Probab. Comput. 22 (2013) 9--20.


\bibitem{CR2014} J. Cutler, A. Radcliffe, The maximum number of complete subgraphs in a graph with given maximum degree, J. Combin. Theory Ser. B 104 (2014) 60--71.

\bibitem{CR2018} J. Cutler, A. Radcliffe, Minimizing the number of independent sets in triangle-free regular graphs, Discrete Math. 341 (2018) 793--800.

\bibitem{DVKP2020} E. Davies, R. de Joannis de Verclos, R. Kang, F. Pirot, Coloring triangle-free graphs with local list sizes, Random Struct. Algorithms 57 (2020) 730--744.

\bibitem{DVKP2021} E. Davies, R. de Joannis de Verclos, R. Kang, F. Pirot, Occupancy fraction, fractional colouring, and triangle fraction, J. Graph Theory 97 (2021) 557--568.

\bibitem{DJP2021} E. Davies, M. Jenssen, W. Perkins, A proof of the upper matching conjecture for large graphs, J. Combin. Theory Ser. B 151 (2021) 393--416.

\bibitem{DJPR2017} E. Davies, M. Jenssen, W. Perkins, B. Roberts, Independent sets, matchings, and occupancy fractions, J. Lond. Math. Soc. 96 (2017) 47--66.

\bibitem{DJPR2018} E. Davies, M. Jenssen, W. Perkins, B. Roberts, On the average size of independent sets in triangle-free graphs, Proc. Amer. Math. Soc. 146 (2018) 111--124.

\bibitem{DP2023} E. Davies, W. Perkins, Approximately counting independent sets of a given size in bounded-degree graphs, SIAM J. Comput. 52 (2023) 618--640.

\bibitem{DSST2026} E. Davies, J. Sandhu, J. Seo, B. Tan, Degree-sequence bounds for independent sets via multivariate local occupancy, (2026) arXiv:2605.05149.

\bibitem{DST2025} E. Davies, J. Sandhu, B. Tan, On expectations and variances in the hard-core model on bounded degree graphs, (2025) arXiv:2505.13396.

\bibitem{FP2007} R. Fern\'andez, A. Procacci, Cluster expansion for abstract polymer models: new bounds from an old approach, Comm. Math. Phys. 274 (2007), 123--140.

\bibitem{FKM2008} S. Friedland, E. Krop, K. Markstr\"{o}m, On the number of matchings in regular graphs, Electron. J. Comb. 15 (1) (2008) R110.


\bibitem{GK1971} C. Gruber, H. Kunz, General properties of polymer systems, Comm. Math. Phys. 22 (1971) 133--161.



\bibitem{JP2020} M. Jenssen, W. Perkins, Independent sets in the hypercube revisited, J. Lond. Math. Soc. 102 (2020), 645--669.

\bibitem{JPP2022} M. Jenssen, W. Perkins, A. Potukuchi, Independent sets of a given size and structure in the hypercube, Combin. Probab. Comput. 31 (2022) 702--720.

\bibitem{JPP2023} M. Jenssen, W. Perkins, A. Potukuchi, Approximately counting independent sets in bipartite graphs via graph containers, Random Struct. Algorithms 63 (2023) 215--241.


\bibitem{JT2020}
S. Jansen, D. Tsagkarogiannis,
Cluster expansions with renormalized activities and applications to colloids,
Ann. Henri Poincar\'e 21 (2020) 45--79.

\bibitem{Jukna2011} S. Jukna, \emph{Extremal Combinatorics: With Applications in Computer Science}, Springer, Berlin, 2011.

\bibitem{K2001} J. Kahn, An entropy approach to the hard-core model on bipartite graphs, Combin. Probab. Comput. 10 (2001) 219--237.

\bibitem{KP1986} R. Koteck\'y, D. Preiss, Cluster expansion for abstract polymer models, Comm. Math. Phys. 103 (1986) 491--498.


\bibitem{LPV1967}
J. Lebowitz, J. Percus, L. Verlet,
Ensemble dependence of fluctuations with application to machine computations,
Phys. Rev. 153 (1967) 250--254.

\bibitem{MS2025} A. Martinsson, R. Steiner,
Random independent sets in triangle-free graphs,
Forum Math. Sigma 13 (2025) 1--19.

\bibitem{M2019} M. Molloy, The list chromatic number of graphs with small clique number, J. Combin. Theory Ser. B 134 (2019) 264--284.




\bibitem{PP2018} G. Perarnau, W. Perkins, Counting independent sets in cubic graphs of given girth, J. Combin. Theory Ser. B 133 (2018) 211--242.



\bibitem{PT2012} E. Pulvirenti, D. Tsagkarogiannis,
Cluster expansion in the canonical ensemble,
Comm. Math. Phys. 316 (2012) 289--306.

\bibitem{PT2015} E. Pulvirenti, D. Tsagkarogiannis,
Finite volume corrections and decay of correlations in the canonical ensemble,
J. Stat. Phys. 159 (2015) 1017--1039.


\bibitem{PU2009}
S. Poghosyan, D. Ueltschi, Abstract cluster expansion with applications to statistical mechanical systems,
J. Math. Phys. 50 (2009) 053509.


\bibitem{SSSZ2019} A. Sah, M. Sawhney, D. Stoner, Y. Zhao, The number of independent sets in an irregular graph, J. Combin. Theory Ser. B 138 (2019) 172--195.

\bibitem{ST2015} D. Saxton, A. Thomason, Hypergraph containers, Invent. Math. 201 (2015) 925--992.

\bibitem{SS2005} A. Scott, A. Sokal, The repulsive lattice gas, the independent-set polynomial, and the Lov\'asz local lemma, J. Stat. Phys. 118 (2005) 1151--1261.


\bibitem{S1995} J. Shearer, On the independence number of sparse graphs, Random Struct. Algorithms 7 (1995) 269--271.


\bibitem{WET2024}
N. Wilding, R. Evans, F. Turci,
What is the best simulation approach for measuring local density fluctuations near solvo-/hydrophobes?
J. Chem. Phys. 160 (2024) 164103.


\bibitem{ZX2026} W. Zhang, K. Xu, On the variance fraction of the hard-core model on graphs with bounded maximum degree, (2026) arXiv:2604.01717v3.

\bibitem{Z2010} Y. Zhao, The number of independent sets in a regular graph, Combin. Probab. Comput. 19 (2010) 315--320.

\bibitem{Z2017} Y. Zhao, Extremal regular graphs: independent sets and graph homomorphisms, Amer. Math. Monthly 124 (2017) 827--843.

\end{thebibliography}
\end{document}